\documentclass[reqno,12pt]{amsart}
\usepackage{graphicx} 
\usepackage{booktabs}
\usepackage[numbers,square]{natbib}

\makeatletter
\def\l@section{\@tocline{1}{0pt}{0pc}{}{}}
\def\l@subsection{\@tocline{2}{0pt}{2.3em}{}{}}
\makeatother

\allowdisplaybreaks

 \usepackage{amsfonts,amssymb,enumerate,epsf,amsmath,bbm,amsthm,amscd}
 \usepackage[mathscr]{eucal}
 \usepackage{xcolor}
 \usepackage{tikz}
\usetikzlibrary{arrows.meta, positioning}
 \usepackage{standalone}
\usepackage{hyperref}
\usepackage{verbatim}
\usepackage{pgfplots}
\usepackage{pgfplotstable}
\usepgfplotslibrary{colorbrewer}
\usepgfplotslibrary{groupplots}
\usetikzlibrary{patterns}
\usetikzlibrary{arrows}
\usepackage[utf8]{inputenc} 
\usepackage[T1]{fontenc}
\usepackage{microtype}
\usepackage{mathrsfs}
\usepackage{float}
\usepackage{subcaption}
\hypersetup{colorlinks=true, linkcolor=blue, citecolor=blue,  filecolor=blue, urlcolor=blue}
\usepackage{xcolor}
\definecolor{ForestGreen}{RGB}{34,139,34}

\usepackage{fullpage}

\theoremstyle{definition}
\newtheorem{thm}{Theorem}

\newtheorem{lem}{Lemma}

\definecolor{ForestGreen}{RGB}{34,139,34}

\definecolor{grigio}{RGB}{232,232,232}
\definecolor{grigio2}{RGB}{182,182,182}

\newcommand{\cal}[1]{\mathcal{#1}}

\def \cI {{\cal I}}

 \def \cX {{\cal X}}

 \def \L {{\Lambda}}

 \def \b {{\beta}}
 
 \def \D {{\Delta}}
 
\def \m {{\mu}}

 \def \h {{\eta}}
 
 \def \z {{\zeta}}

 \def \o {{\omega}}

\newtheorem{theorem}{Theorem}[section]

\newtheorem{definition}[theorem]{Definition}

\numberwithin{equation}{section}
\numberwithin{figure}{section}
\numberwithin{thm}{section}
\numberwithin{lem}{section}

\title{Optimal Strategies for Controlled Growth \\ in Metastable Kawasaki Dynamics}

\author[S. Baldassarri]{Simone Baldassarri}
\address{Gran Sasso Science Institute, Viale Francesco Crispi 7, 67100 L’Aquila, Italy}
\email{simone.baldassarri@gssi.it}
\thanks{}
\author[M. de Jongh]{Maike C. de Jongh}
\address{Department of Applied Mathematics, University of Twente, P.O. Box 217, NL-7500 AE Enschede}
\email{m.c.dejongh@utwente.nl}

\begin{document}

\begin{abstract}
In this paper, we develop a Markov decision process (MDP) formulation for the low--temperature metastable Ising model evolving according to Kawasaki dynamics in a finite box of the two--dimensional square lattice. We analyze how an external controller can guide the system to the all--occupied state by appropriately adding and moving particles at specified moments in time. To this end, we construct a reduced MDP on a constrained family of configurations having a single cluster, a regime where particle attachment is more likely than detachment. We investigate two reward structures: one that depends solely on the time to reach the target configuration, and another that incorporates action--dependent energy costs. Within this MDP framework, we characterize the exact optimal policies under both reward structures, which turn out to have a different behavior: while a purely efficiency--based criterion promotes the growth from the boundary centers of the cluster, an energy--based reward function favours the growth at the corners of the cluster.
\end{abstract}

\maketitle

\noindent
{\it MSC} 2020 {\it subject classifications.} 
90C40, 
60K35, 
82C26. 
\\
{\it Key words.} Bellman equations, Metastability, Kawasaki dynamics, Sequential decision making, Markov decision processes


\section{Introduction}

\subsection{Background and motivation}

Metastability is a hallmark of complex stochastic systems that exhibit long--lived quasi--stationary regimes before undergoing rare transitions to more stable configurations. In lattice spin systems, interacting particle systems, and diffusive dynamics, such transitions are exponentially unlikely at low temperature and are typically driven by the rare crossing of energy barriers. Traditionally, metastability has been studied through two main approaches: the pathwise approach \cite{Cassandro1984,Manzo2004,Olivieri2005} and potential theory 
\cite{Bovier2015, Bovier2002}. More recent approaches are developed in \cite{Beltrn2010,Beltrn2014,Bianchi2016,Bianchi2020,landim2021resolvent}. These classical analyses characterize the exponential asymptotics of transition rates through Eyring–Kramers–type formulas, which link escape times to the geometry of the energy landscape. See the review \cite{Jacquier2025} for more details. While these methods have produced deep insights into metastable phenomena, they mainly focus on {\it static} transition probabilities and asymptotic characterizations of rare events, rather than on {\it dynamic mechanisms} or {\it strategies} to influence or control those transitions.

In this paper, we focus on the Ising model evolving according to Kawasaki dynamics inside a finite square box, i.e., particles hop around randomly subject to hard--core repulsion and nearest--neighbour attraction. In the low--temperature regime, the relaxation from a metastable configuration typically occurs through the nucleation of a critical droplet of the stable phase. For the two--dimensional Ising lattice gas with nearest--neighbor interactions, the critical nucleus takes the form of a minimal square droplet whose size depends on the binding energy and the per--particle activation energy. Once this droplet appears, it rapidly grows to invade the system. This mechanism and the associated exponential scaling of exit times have been rigorously characterized in \cite{Bovier2005, denHollander2000}. Subsequent studies have extended these analyses to three dimensions \cite{denHollander2003}, anisotropic interactions \cite{Baldassarri2021,Baldassarri2022strong,Baldassarri2022weak,NOS2005}, infinite volume \cite{baldassarri2023droplet,baldassarri2024homogeneous, Bovier2010,Gaudilliere2009}, two--particle systems \cite{denHollander2011,denHollander2012} and hexagonal lattice \cite{baldassarri2023metastability}.

For such systems, direct numerical simulation at low temperatures becomes nearly intractable. The origin of this difficulty is well known: when the inverse temperature $\beta$ is large, the expected exit time from a metastable basin grows exponentially with $\beta$, typically as $\mathbb{E}[\tau_{exit}]\sim\exp(\beta\Gamma)$ for some energy barrier $\Gamma>0$, which corresponds to the energy of the critical droplet. As a consequence, trajectories remain trapped for extremely long times in metastable valleys before a single relevant transition occurs. Because local relaxation occurs rapidly while escapes are exponentially rare, brute--force Monte Carlo simulations spend almost all their time exploring local fluctuations, yielding little information about the transitions themselves.
Moreover, rare--event estimation may lead to large relative variance: to obtain reliable estimates of mean exit times or transition probabilities, one would need an exponentially large number of independent realizations. Common variance--reduction or coarse--graining techniques often distort the energy landscape, thereby destroying the asymptotic properties one seeks to understand. The situation is further complicated in the conservative setting of Kawasaki dynamics, where the number of particles is fixed and the configuration space is both combinatorially large and constrained. 
For these reasons, the modern study of metastability increasingly demands {\it computational} and {\it algorithmic perspectives}.

To overcome these difficulties, we adopt a complementary viewpoint by formulating the acceleration of Kawasaki dynamics within the framework of Markov Decision Processes (MDPs) \cite{Puterman}. Since its introduction in the 1950s, the MDP has been a widely applicable tool for modeling sequential decision making problems under uncertainty. The general setup consists of a system that evolves over time on a certain \textit{state space}. The dynamics of this system are influenced by an external \textit{decision maker}, who can execute \textit{actions} at each time in a discrete set of \textit{decision epochs}. The way in which the decision maker selects these actions is described by means of a \textit{policy}. After taking an action, the decision maker receives a reward determined by the current state of the system and the selected action. The quality of a policy is quantified by the so-called \textit{value function}, typically defined as the expected discounted cumulative reward. A policy is \textit{optimal} if it maximizes this value function.

In a range of contexts, the MDP has proved to be a valuable framework for systems characterized not only by temporal but also by spatial stochastic structures, for example in wildfire and forest management~\cite{Altamimi2022, Diao2020,Haksar2020,Haksar2019}, in the development of intervention strategies for networked contagion~\cite{Nasir2023,Song2023}, and in controlling the assembly of colloidal particles in crystalline structures~\cite{Tang2016,Tang2017}.
In each of these settings, the MDP framework is used to control stochastic processes that propagate over a spatial structure. However, this spatial richness comes at a cost: the state space grows exponentially with the number of sites, rendering exact optimization computationally intractable. Consequently, much of the research on these control problems focuses on approximation techniques or reduced models that preserve essential spatial features, such as locality and interaction structure, while allowing tractable computation of optimal policies. In \cite{JBL25}, the MDP framework is used to derive an optimal mechanism to control the two-dimensional Ising model evolving under Metropolis dynamics in the metastable regime. To address the large configuration space, a simplified MDP was introduced, based on a reduction of the state space to the local minima of the Hamiltonian. The present work extends this approach to the control of the low-temperature Kawasaki dynamics. The MDP perspective on stochastic systems in metastable regimes offers several key advantages, both conceptually and computationally.
\begin{itemize}
    \item {\it Algorithmic acceleration}: by interpreting the biasing of transition rates as a control problem, one can design optimal or near--optimal acceleration strategies that dramatically reduce simulation times while preserving the correct statistical weights.
    \item {\it Theoretical clarity}: the value function of an MDP can be expressed through a variational principle balancing reward maximization and the probabilistic cost of rare transitions. This formulation closely parallels the structure of large deviation rate functions, thereby unveiling a direct link between optimal control and metastable dynamics.
    \item {\it Spatial adaptivity}: unlike global temperature rescaling, which alters all dynamics equally, MDP--based controls target specific regions of configuration space, thereby accelerating escape near bottlenecks while maintaining the natural motion within metastable wells.
    \item {\it Compatibility with data--driven methods}: in the absence of explicit coarse-grained models, reinforcement learning can infer optimal MDP policies directly from simulation data, enabling adaptive control and efficient exploration of metastable landscapes.
\end{itemize}
In summary, the MDP framework offers a dynamic, optimization--based lens on metastability, complementing the traditional static and asymptotic approaches. It provides both (i) a theoretical formalism to describe low--temperature behavior in terms of optimal control problems and (ii) a practical basis for the design of accelerated numerical schemes. In the low--temperature limit, the optimal MDP strategies recover the leading exponential scaling of the metastable transition times, while offering improved computational tractability.

\subsection{Main contributions}
\label{sec:maincontributions}
In this paper, we propose an MDP framework for analyzing low--temperature metastability for Kawasaki dynamics in a finite box inside the two--dimensional square lattice. The state space consists of particle configurations; possible actions correspond to local particle--exchange moves; and transitions are governed by Kawasaki–Metropolis rates (see \eqref{defkaw} below for more details). The objective (or reward structure) can be engineered to represent hitting a target configuration (e.g. nucleation) while minimizing expected cost, which naturally leads to an optimal policy that effectively brings the system through rare--event barriers. Through this lens, metastable escape becomes not only a statistical rarity but also a controlled dynamical phenomenon amenable to policy--based optimization. We consider two different reward structures: one that focuses exclusively on reaching the target configuration in the most efficient manner, and one that introduces action-dependent costs, mimicking the energy costs of such transitions in the uncontrolled system. Our main contributions are twofold. First, we provide a detailed formulation of the problem of controlling the low-temperature Kawasaki dynamics within the framework of an MDP. Second, we introduce a simplified MDP defined over a specific class of configurations that contain a single particle cluster, in which it is more likely for new particles to enter the box than for particles to detach from a cluster. Finally, we derive the structure of the exact optimal policies in this simplified MDP under each of the two reward structures. Our results demonstrate that incorporating an energy cost into the reward structure yields a markedly different optimal strategy. When the objective is simply to find the most efficient path to the full box, the optimal policy favors growth from the center of the cluster boundary, whereas an energy-based reward function instead induces growth at the cluster corners.

\subsection{Outline}
The structure of the paper is as follows. In Section \ref{sec:model}, we introduce the Ising MDP for Kawasaki dynamics and its auxiliary counterpart, for which we state our main results. The corresponding proofs appear in Section \ref{sec:proof}. Section \ref{sec:concl} presents our conclusions and outlines directions for future work. 

\section{Model and results}
\label{sec:model}

\subsection{The Markov decision process}
\label{sec:markovproc}

In this section, we detail the formal MDP construction, which is defined as a tuple $(S,A,P,r)$, where each element is specified as follows~\cite{Puterman}.

\begin{itemize}
    \item The state space $S$ is the set of all possible states that the system can occupy.
    \item Given a state $s \in S$, the action space $A(s)$ consists of all actions available to the decision maker in that state. Let $A := \cup_{s \in S} A(s)$. 
    \item The transition probability kernel $P: S \times A \times S$ governs the evolution of the MDP. We write $P(s'|s, a)$ for the probability of the system transitioning from state $s$ to state $s'$ when action $a$ was taken. 
    \item The reward function $r: S \times A \rightarrow \mathbb{R}$ assigns the immediate reward $r(s, a)$ received after choosing action $a \in A(s)$ from state $s \in S$. 
\end{itemize}
The times at which the decision maker may take an action are collected in a set $T$ of \textit{decision epochs}. These actions are selected according to a policy that specifies which action to take at each decision epoch. We consider only policies that are \textit{stationary} and \textit{deterministic}. Such a policy is characterized by a single deterministic \textit{decision rule} $d: S \rightarrow A$, prescribing an action $a \in A(s)$ for each state $s \in S$. Let the set of all stationary deterministic policies be denoted by $\Pi$.

The objective of the decision maker is to design a policy that generates the best possible sequence of rewards. Since the rewards are random variables, finding the best policy requires an optimality criterion in terms of random sequences. We assess the quality of a policy $\pi \in \Pi$ by means of its \textit{expected total discounted reward}, or its \textit{value function} $v^{\pi}_{\lambda}: S \rightarrow \mathbb{R}$, given by
\begin{equation}
    v^{\pi}_{\lambda}(s) = \mathbb{E}^{\pi}_s\left[\sum\limits_{t=1}^{\infty}\lambda^{t-1}r(X^{\pi}_t, Y^{\pi}_t)\right],
\end{equation}
where $X^{\pi}_t$ denotes the state of the system at decision epoch $t$, $Y^{\pi}_t$ the selected action at decision epoch $t$, $\lambda \in (0,1)$ is a so-called \textit{discount factor}, and $s \in S$ is the initial state. 

A policy $\pi^* \in \Pi$ is called \textit{optimal} if it satisfies
\begin{equation}
    v^{\pi^*}_\lambda(s) \geq v^{\pi}_{\lambda}(s), \quad s \in S,
\end{equation}
for all $\pi \in \Pi$. Simplifying notation, we will denote the value function of an optimal policy $\pi^*$ by $v^*_{\lambda}(s)$, $s \in S$. 

The optimal values and policies in infinite horizon models can be characterized by the so-called \textit{optimality equations} or \textit{Bellman equations}:
\begin{equation}
    v_{\lambda}(s) = \max_{a \in A(s)}\{r(s, a) + \sum\limits_{j \in S} \lambda P(j|s, a)v_{\lambda}(j)\}.
\end{equation}
The following theorem establishes the usefulness of these equations in identifying optimal policies.
\begin{thm}\label{optimality_thm}\cite{Puterman}
A policy $\pi^* \in \Pi$ is optimal if and only if $v^{\pi^*}_{\lambda}$ is a solution to the optimality equations.
\end{thm}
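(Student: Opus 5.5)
We prove Theorem~\ref{optimality_thm} by working with the Bellman operator and its contraction property. We assume $S$ and each $A(s)$ are finite, so the maximum is attained and every decision rule $d:S\to A$ with $d(s)\in A(s)$ defines a policy in $\Pi$. We also assume the reward $r$ is bounded.

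\textbf{Step 1: the two operators.} On the Banach space $V$ of bounded functions $S\to\mathbb{R}$ with the sup norm, define
\[
(\mathcal{L}v)(s)=\max_{a\in A(s)}\Bigl\{r(s,a)+\lambda\sum_{j\in S}P(j|s,a)v(j)\Bigr\},
\]
and, for a decision rule $d$,
\[
(\mathcal{L}_d v)(s)=r(s,d(s))+\lambda\sum_{j\in S}P(j|s,d(s))v(j).
\]
Both operators are monotone. Both are $\lambda$-contractions, because $|\max_a f(a)-\max_a g(a)|\le \max_a|f(a)-g(a)|$ and the $P(\cdot|s,a)$ are probability vectors. By Banach's fixed point theorem, each has a unique fixed point in $V$. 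Conditioning on the first transition and using the Markov property gives $v^{\pi}_\lambda=\mathcal{L}_d v^{\pi}_\lambda$ for the stationary policy $\pi$ with rule $d$. Hence $v^\pi_\lambda$ is the unique fixed point of $\mathcal{L}_d$.

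\textbf{Step 2: the fixed point of $\mathcal{L}$ dominates every policy.} Let $v^\star$ be the unique fixed point of $\mathcal{L}$. For any $\pi\in\Pi$ with rule $d$ we have $\mathcal{L}_d v\le \mathcal{L}v$ pointwise. Iterating from $v^\star$ and using monotonicity,
\[
\mathcal{L}_d^n v^\star\le \mathcal{L}^n v^\star=v^\star.
\]
Since $\mathcal{L}_d^n v^\star\to v^\pi_\lambda$, we get $v^\pi_\lambda\le v^\star$. We also need $v^\star$ to be attained. Choose a conserving rule $d^\star(s)\in\arg\max_a\{\cdots\}$ evaluated at $v^\star$. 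Then $\mathcal{L}_{d^\star}v^\star=\mathcal{L}v^\star=v^\star$, and uniqueness of the fixed point of $\mathcal{L}_{d^\star}$ gives $v^{\pi_{d^\star}}_\lambda=v^\star$. Therefore $v^\star=\sup_{\pi\in\Pi}v^\pi_\lambda=v^*_\lambda$.

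\textbf{Step 3: the equivalence.} Suppose $v^{\pi^*}_\lambda$ solves the optimality equations. By uniqueness it equals $v^\star$, and by Step 2 it dominates every $v^\pi_\lambda$, so $\pi^*$ is optimal. Conversely, suppose $\pi^*$ is optimal. Then $v^{\pi^*}_\lambda\ge v^{\pi_{d^\star}}_\lambda=v^\star$, and Step 2 gives $v^{\pi^*}_\lambda\le v^\star$. Hence $v^{\pi^*}_\lambda=v^\star=\mathcal{L}v^{\pi^*}_\lambda$, so it solves the optimality equations.

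The main obstacle is Step 2. Optimality is defined only within stationary deterministic policies, so we must show that the supremum of $v^\pi_\lambda$ over $\Pi$ is attained and coincides with the fixed point of $\mathcal{L}$. The conserving-rule construction handles this, and it relies on finiteness of $A(s)$ so that the maximum in $\mathcal{L}$ is achieved. The rest is standard contraction and monotonicity bookkeeping.
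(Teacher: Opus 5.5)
Your argument is correct. The paper gives no proof of its own and simply defers to Puterman, where the result rests on the same two ingredients you use:
\begin{itemize}
\item the $\lambda$-contraction of the Bellman operator, which gives the optimality equations a unique bounded solution;
\item the monotone comparison $\mathcal{L}_d v\le\mathcal{L}v$ for every decision rule $d$.
\end{itemize}
There, however, optimality is measured against all history-dependent randomized policies, and the supremum over actions need not be attained. That setting needs extra machinery: reduction to Markov policies, and $\varepsilon$-optimal decision rules in place of an exactly conserving one.

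Your version is tailored to the paper's setting. Because $\Pi$ contains only stationary deterministic policies and $A(s)$ is finite, a single exactly conserving rule $d^\star$ shows that $\sup_{\pi\in\Pi}v^\pi_\lambda$ is attained and equals the fixed point. This step is exactly what the bare citation leaves implicit. The paper calls a policy optimal only relative to $\Pi$, so one must still check that such a policy has value equal to the fixed point of $\mathcal{L}$, and your Step 3 does this.

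What you give up is generality: countable state spaces, unattained suprema, and optimality against non-stationary policies. What you gain is a short self-contained proof. Its extra hypotheses (finite $S$ and $A(s)$, bounded $r$) are not stated in the theorem, but they hold for both the MDP on $W$ and the auxiliary MDP on $\hat S$, since $\Lambda$ is finite.
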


\subsection{Definition of the Ising MDP for Kawasaki dynamics}
Let $\Lambda=\{0,1,...,L\}^2\subset\mathbb{Z}^2$ be a finite box centered at the origin. The side length $L$ is fixed, but arbitrarily large. Let
\begin{equation}\label{inbd}
\partial^- \Lambda:= \{x\in\L: \exists\; y \notin\Lambda\: |y-x|=1\},
\end{equation}
be the interior boundary of $\Lambda$ and let $\Lambda_0:= \Lambda\setminus\partial^- \Lambda$ be the interior of $\Lambda$.

With each $x\in\Lambda$ we associate an occupation variable $\eta(x)$, assuming values 0 (empty site) or 1 (occupied site). A lattice configuration is denoted by $\eta\in\cX=\{0,1\}^\Lambda$. Each configuration $\h\in \cX$ has an energy given by the following Hamiltonian:
\begin{equation}\label{hamilt} 
H(\eta)= -U \sum_{\{x,y\}\in   \Lambda_0^{*}} \eta(x)\eta(y) + \D \sum _{x\in \L} \eta (x), 
\end{equation}
where $ \Lambda_0^{*}$ denotes the set of {\it unoriented} bonds between nearest--neighbour sites in $\Lambda_0$, i.e., there is a {\it binding energy} $-U<0$ between neighbouring particles, and $\Delta>0$ is the per--particle {\it activation energy}. Taking $\Delta\in(U,2U)$ corresponds to the metastable regime, and we further assume that $\Delta>\frac{3}{2}U$ to avoid trivial cases, i.e., the side length of the critical droplet is larger than 2. We refer the interested reader to \cite{denHollander2000} for more details. The grand--canonical Gibbs measure associated with $H$ is
\begin{equation}\label{misura} 
\m(\eta)= {  e^{- \b H(\eta) }\over Z_\beta}, \qquad \h\in
\cX, 
\end{equation}
where $\beta>0$ is the inverse temperature, and
\begin{equation}\label{partfunc} 
Z_\beta=\sum_{\eta\in {\cal X}}e^{-\b H(\eta)}
\end{equation}
is the so--called {\it partition function}.

We define now Kawasaki dynamics on $\Lambda$ with boundary conditions that mimic the effect of an infinite gas reservoir outside $\Lambda$ with density $ \rho = \exp(-\Delta\beta)$. Let $b=(x \to y)$ be an oriented bond, i.e., an {\it ordered} pair of nearest neighbour sites, and define

\begin{equation}\label{Loutindef}
\begin{array}{lll}
\partial^* \Lambda^{out} &:=& \{b=(x \to y): x\in\partial^- \Lambda,
y\not\in\Lambda\},\\
\partial^* \Lambda^{in}  &:=& \{b=(x \to y): x\not\in
\Lambda, y\in\partial^-\Lambda\},\\
\Lambda^{*, orie} &:=& \{b=(x \to y): x,y\in\Lambda\},
\end{array}
\end{equation}
and put $ \bar\Lambda^{*, orie}:=\partial^* \Lambda ^{out}\cup \partial^* \Lambda ^{in}\cup\Lambda^{*,\;orie}$. Two configurations $  \eta,
\eta'\in {\cal X}$ with $ \eta\ne \eta'$, are said to be {\it communicating states} if there exists a bond $b\in  \bar\L^{*,orie}$ such that $ \eta' = T_b \eta$, where $T_b \eta$ is the configuration obtained from $ \eta$ in any of these ways:
\begin{itemize}
	\item
	for $b=(x \to y)\in\Lambda^{*,\;orie}$, $T_b \eta$ denotes the configuration obtained from $\eta$ by interchanging particles along $b$:
	\begin{equation}\label{Tint}
	T_b \h(z) =
	\left\{\begin{array}{ll}
	\h(z) &\mbox{if } z \ne x,y,\\
	\h(x) &\mbox{if } z = y,\\
	\h(y) &\mbox{if } z = x.
	\end{array}
	\right.
	\end{equation}
	
	\item
	For  $b=(x \to y)\in\partial^*\Lambda^{out}$ we set:
	\begin{equation}\label{Texit}
	T_b \h(z) =
	\left\{\begin{array}{ll}
	\h(z) &\mbox{if } z \ne x,\\
	0     &\mbox{if } z = x.
	\end{array}
	\right.
	\end{equation}
	This describes the annihilation of a particle along the border;
	
	\item
	for  $b=(x\to y)\in\partial^*\Lambda^{in}$ we set:
	\begin{equation}\label{Tenter}
	T_b \h(z) =
	\left\{\begin{array}{ll}
	\h(z) &\mbox{if } z \ne y,\\
	1     &\mbox{if } z=y.
	\end{array}
	\right.
	\end{equation}
	This describes the creation of a particle along the border.
	
\end{itemize}

\noindent
The Kawasaki dynamics is  the discrete time Markov chain
$(\eta_t)_{t\in \mathbb{N}}$ on state space $ {\cal X} $ given by
the following Metropolis transition probabilities: for  $  \eta\not= \eta'$:
\begin{equation}\label{defkaw}
P_\beta( \eta,  \eta'):=\left\{
\begin{array}{ll}
{ |\bar\Lambda^{*,orie}|}^{-1} e^{-\b[H( \eta') - H( \eta)]_+}
&\mbox{if }  \exists b\in \bar\Lambda ^{*,orie}: \eta' =T_b \eta,   \\
0   &\mbox{ otherwise, }  
\end{array} 
\right.
\end{equation}
where $[a]_+ =\max\{a,0\}$ and $P_\beta(\h,\h):=1-\sum_{\h'\neq\h}P_\beta(\h,\h')$.

In what follows we introduce the notions of {\it downhill configurations} and {\it susceptible bonds}. A configuration $\eta' \in \mathcal{X}$ is called downhill configuration of a configuration $\eta \in \mathcal{X}$ if there exists a sequence $\omega = (\eta_1, \eta_2, ..., \eta_{\ell})$ for some $\ell \in \mathbb{N}$ that satisfies $\eta_1 = \eta$, $\eta_{\ell} = \eta'$, $H(\eta_{t+1}) \leq H(\eta_t)$ for each $t = 1, ..., \ell -1$ and $\eta_{t+1} = T_b \eta_t$ for some $b \in \bar\Lambda^{*, orie}$ for each $t = 1, ..., \ell -1$. We say that a bond $b \in \bar\Lambda^{*, orie}$ is susceptible in a configuration $\eta \in \mathcal{X}$ if $H(T_b \eta) \leq H(\eta)$. Note that this is equivalent to the condition $\lim_{\beta \rightarrow \infty} P_{\beta}(\eta, T_b \eta) > 0$.

Each decision epoch, the decision maker can select a bond $b=(x \to y)\in\Lambda^{*,\;orie}$ satisfying $\eta(x) \neq \eta(y)$ and interchange particles along this bond. The aim is to reach a certain target configuration $\eta^*$, for example the full box, from a starting configuration $\eta_0$. In the low-temperature regime, the process is prone to becoming trapped in local minima of the energy function when given sufficient time to relax after an action. Therefore, motivated by \citet{JBL25}, we define an MDP ranging over local minima. In particular, the state space consists of those local minima that contain one cluster of particles and for which the entering of a new particle inside the box is more likely than the detachment of a particle from the cluster. Note that this is only true for configurations for which detaching a particle requires an energy cost of at least $2U$. We call such configurations \textit{robust}. To formally define the robust configurations, we introduce the notions of {\it communication height} and {\it stability level}.  The  communication height between a pair $\h,\h'\in\cX$ is
\begin{equation}\label{comh}
\Phi(\h,\h')= \min_{\o\colon\;\h\to\h'} \max_{\z\in\o} H(\z).
\end{equation}
We call the stability level of a state $\z \in \cX$ the energy barrier
\begin{equation}\label{stab}
V_{\z} = \Phi(\z,\cI_{\z}) - H(\z),
\end{equation}
\noindent
where $\cI_{\z}$ is the set of states with energy below $H(\z)$:
\begin{equation}\label{iz} 
\cI_{\z}=\{\eta \in \cX:\; H(\eta)<H(\z)\}. 
\end{equation}
We set $V_\z:=\infty$ if $\cI_\z$ is empty. 

Finally, if $\eta$ is a configuration with a single cluster, then we denote by $\hbox{CR}(\h)$ the smallest rectangle circumscribing that cluster.
\newpage 

\begin{definition}
    A configuration $\eta \in \mathcal{X}$ is called \textit{robust} if
    \begin{itemize}
    \item[(i)] $\eta$ is composed by a unique cluster;
    \item[(ii)] either $V_\eta > 2U$, or $V_\eta = 2U$ and the minimal side length of $\hbox{CR}(\h)$ is 2.
    \end{itemize}
\end{definition}
\begin{lem}\label{robust_rectangle}
The set of robust configurations coincides with the set of configurations having one rectangular cluster only with minimal side length larger than $1$.
\end{lem}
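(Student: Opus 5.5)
The plan is to prove the two inclusions separately, computing or bounding $V_\eta$ through explicit paths built from the single moves $T_b$. The elementary energy bookkeeping from \eqref{hamilt} is: breaking a bond costs $U$; removing a particle through $\partial^-\Lambda$ gains $\Delta$; creating one costs $\Delta$. We use the standing assumption $\tfrac32U<\Delta<2U$ throughout. We also use that a free particle, i.e.\ one with no occupied neighbour, moves at zero cost. Such a particle can therefore be walked to $\partial^-\Lambda$ and annihilated without raising the energy. Here we assume the cluster keeps away from $\partial^-\Lambda$, or else treat boundary contact separately.

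\emph{Robust $\Rightarrow$ rectangle with minimal side $\ge2$.} We argue by contraposition. Let $\eta$ be a single cluster that is either non-rectangular or a rectangle with a side of length $1$. We aim to show $V_\eta\le U<2U$, which rules out both alternatives in (ii).
\begin{itemize}
\item \emph{A particle with only one bond.} This covers the endpoint of a $1\times\ell$ bar and any protuberance. Detaching it raises $H$ by $U$. Walking it out and annihilating it then reaches energy $H(\eta)+U-\Delta<H(\eta)$, so $V_\eta\le U$.
\item \emph{No particle has a single bond.} Since the cluster is not a rectangle, $\hbox{CR}(\eta)$ contains an empty site. Looking at an extremal row or column of the cluster that does not fill its side of $\hbox{CR}(\eta)$, we obtain an end particle with exactly two bonds. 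Moving it one step along the cluster boundary costs at most $U$: it becomes a particle with one bond. Continuing to slide it at no further cost brings it to an empty site with at least two occupied neighbours, e.g.\ the concave corner at the end of the partial row. This either strictly decreases the energy, or creates a one-bond particle and we fall back to the first case. In either case the barrier is $U$.
\end{itemize}
This step is the main obstacle. One has to make the combinatorial claim rigorous: every non-rectangular cluster without one-bond particles admits such a sliding move of height $U$ ending strictly lower. I would first try induction on the number of empty sites of $\hbox{CR}(\eta)$ adjacent to the cluster, choosing the shortest incomplete extremal row.

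\emph{Rectangle with minimal side $\ge2$ $\Rightarrow$ robust.} Condition (i) is immediate. For (ii), the claim is that $V_\eta>2U$ when the minimal side is $\ge3$, and $V_\eta=2U$ when it equals $2$. The plan is a lower bound on the first energy increase of every path leaving $\eta$ that eventually goes below $H(\eta)$. There are only three kinds of first step:
\begin{itemize}
\item moving a boundary particle, which costs at least $U$ and at least $2U$ for a corner;
\item moving a non-corner edge particle, which costs at least $2U$;
\item creating a particle, which costs $\Delta$.
\end{itemize}
Creation routes need a second creation before any net gain, since attaching one particle gives only $\Delta-U>0$. Hence they reach height at least $2\Delta-U>2U$. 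Shrinking routes start by detaching a corner at cost $2U$. After annihilating it we are at $2U-\Delta>0$.
\begin{itemize}
\item If the short side is $2$, the remaining particle of that column has one bond. It can be removed at height $3U-\Delta<2U$, ending at $2U-2\Delta<0$. This gives exactly $V_\eta=2U$.
\item If the short side is $\ge3$, the next particle still has two bonds, so the height is at least $4U-\Delta>2U$. Sliding moves that merely rearrange the boundary return to energies $\ge H(\eta)$ at cost at least $2U$, and then face the same bounds.
\end{itemize}
Making this exhaustive, for instance via the standard isoperimetric bound that any $n$-particle cluster has at least as many broken bonds as the rectangle, gives $V_\eta\ge2U$ with strict inequality unless the minimal side is $2$.
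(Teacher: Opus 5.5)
The decisive problem is in your second half (rectangle $\Rightarrow$ robust). It follows the paper's own heuristic, and it fails at the claim that creation routes need a second creation before any net gain, and that rearrangements of the boundary cost at least $2U$. Once a created particle sits as a protuberance at height $\Delta-U$, a corner particle next to it can step up beside it at cost $U$. So the cluster can be reshaped while staying at height $\Delta<2U$, and for elongated rectangles this reaches lower energy. Take $\eta=\{0,\dots,3\}\times\{0,1\}$ and attach a created particle at $(1,2)$. Then move $(0,1)\to(0,2)$, $(0,0)\to(0,1)$, $(1,2)\to(2,2)$, $(0,2)\to(1,2)$, and walk the particle at $(0,1)$ through $(0,2),(0,3),(1,3),(2,3),(3,3)$ to $(3,2)$. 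The energy never exceeds $H(\eta)+\Delta$, and the endpoint is the square $\{1,2,3\}\times\{0,1,2\}$, with energy $H(\eta)+\Delta-2U<H(\eta)$. Every first move from $\eta$ costs at least $\Delta$, so $V_\eta=\Delta<2U$, and this $2\times4$ rectangle is not robust. The same mechanism takes $3\times5$ to $4\times4$, which contradicts your claim $V_\eta>2U$ for minimal side $3$. The isoperimetric bound cannot rescue the step: when $\ell_2\ge\ell_1+2$, an $(\ell_1+1)\times(\ell_2-1)$ box holds $n+1$ particles with two more bonds than $\eta$. So this direction of the statement is false as written; presumably it must be restricted to quasi-squares, $|\ell_1-\ell_2|\le1$. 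The paper's proof has the same hole: its case $\ell\ge\ell_c$ overlooks these cost-$U$ moves, and its other cases only exhibit upper-bound paths. (Minor: your shrinking path ends at $3U-2\Delta$, not $2U-2\Delta$.)

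Your first half also aims at a false bound, namely $V_\eta\le U$ for every non-rectangular cluster. Take $\{0,1,2\}^2\cup\{(0,3),(1,3)\}$. No particle has a single bond, and $15$ bonds is the maximum possible for $11$ particles. Freeing any particle leaves $10$ particles with at most $13$ bonds, at cost at least $2U$, and a creation costs $\Delta>U$. Hence $V_\eta=\Delta>U$; the cheapest escape is a creation dropped at $(2,3)$. Your sliding move $(1,3)\to(2,3)$ does cost $U$, but from there the only way down is to refill $(1,3)$, which returns to a translate of $\eta$ at the same energy. Since robustness forces $V_\eta\ge2U$, you only need $V_\eta<2U$, and that is easy. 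A one-bond particle gives $V_\eta\le U$, as you say. Otherwise, a hole-free non-rectangular cluster has a reflex vertex on its outer boundary, hence an exterior empty site with two occupied neighbours. Creating a particle and dropping it there reaches $H(\eta)+\Delta-2U<H(\eta)$ at height $\Delta$. Clusters with holes need a separate argument. The paper does not argue this direction directly; it cites \cite[Proposition 5.11(iii)]{dHOS2000} and \cite[Proposition 16]{NOS2005}.
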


\begin{figure}
	\centering
	\includegraphics[width=0.8\textwidth]{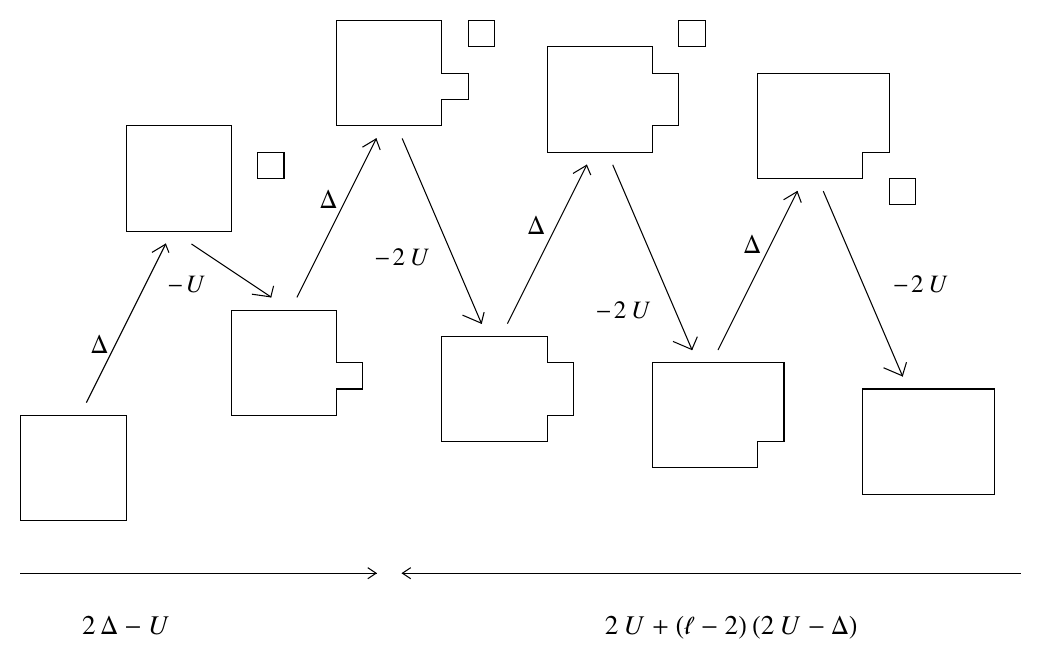}
	\caption{\small Cost of adding or removing a row of length $\ell$. This figure is taken from \cite{denHollander2000}. \normalsize}
	\label{fig:resistenza}
\end{figure}

\begin{proof}
First, we show that any configuration $\eta$ having only one rectangular cluster with minimal side length larger than $1$ is a robust configuration. We start by considering $\eta$ such that it contains a rectangular cluster with minimal side length 2. Thus, it is easy to prove that $V_\eta = 2U$. Indeed, by detaching a particle at cost $2U$, then it is possible to let it exit the box at cost $-\Delta$, and finally remove a particle at cost $U$, giving rise to a maximal energy difference equal to $2U$, because $U-\Delta<0$. Finally, by letting the last detached particle exit the box, the final energy is $H(\eta)+2U-\Delta+U-\Delta<H(\eta)$, which proves the claim.

Consider now the case in which $\eta$ contains a rectangular cluster with minimal side length $\ell$ larger than 2. We need to show that $V_\eta>2U$. We consider separately the cases $\ell<\ell_c$ and $\ell\geq\ell_c$, where $\ell_c$, which depends on the parameters $U$ and $\Delta$, is the critical length for the local model \cite{denHollander2000}. 

\smallskip
\noindent
{\bf Case $\ell<\ell_c$.} The claim follows after arguing as in the case above, where now the second particle is detached at cost $2U$ as well. The path then proceeds as in Figure \ref{fig:resistenza}.

\smallskip
\noindent
{\bf Case $\ell\geq\ell_c$.}
Since there is no allowed move at zero cost, the move with lowest energy cost is to let a particle enter the box at cost $\Delta$, since detaching any of the particles belonging to the cluster costs at least $2U$. From now on, in order to lower the energy the unique possibility is to move the free particle towards the cluster and attach it as a $1$--protuberance at cost $-U$. The total energy difference is then $\Delta-U>0$. Apart from detaching the protuberance again, or move it at zero cost along the side it is attached to, the only way to lower the energy is to let a new particle inside the box and then attach it next to the $1$--protuberance, so that the energy can decrease by $2U$. But when the new particle enters the box, the energy increases by $\Delta$, so that the total energy cost is $2\Delta-U>2U$. The path then follows as in Figure \ref{fig:resistenza}. This proves that $V_\eta>2U$. 

Finally, we consider a robust configuration $\eta$ and we want to show that it is composed by a unique rectangular cluster with minimal side length larger than 1. If $V_\eta = 2U$ and the minimal side length of $\hbox{CR}(\eta)$ is 2, then it is not geometrically possible that $\eta$ contains holes (i.e., one of the cluster of $\eta$ is not simply connected). Thus, by \cite[Proposition 5.11(iii)]{dHOS2000} it follows that $\eta$ contains a rectangular cluster with minimal side length 2. If $V_\eta>2U$, then by \cite[Proposition 16]{NOS2005} when $U_1=U_2=U$, it follows that $\eta$ is composed by a rectangular cluster with minimal side length larger than 2. This concludes the proof. 
\end{proof}

We denote the set of robust configurations by $W$. 
Let the set of decision epochs be discrete and infinite ($T = \{1, 2, \cdots\}$). At each decision epoch, given a configuration $\eta \in W$, the decision maker may choose any bond $b=(x \to y)\in\Lambda^{*,\;orie}$ satisfying $\eta(x) \neq \eta(y)$ and interchange particles along this bond. Accordingly, for each $\eta \in W$, we define the action space 
\begin{equation*}
    A(\eta) = \begin{cases}
        \{0\}, &\text{if } \eta(x) = \eta(y) \text{ for all } x,y \in \Lambda, \\
        \{b = (x \to y) \in \Lambda^{*,orie}| \eta(x) \neq \eta(y)\}, &\text{otherwise},
        \end{cases}
\end{equation*}
where the action $a = 0$ represents not interchanging any particles. 
Given an action $a \in A(\eta)$, we let $\eta^a$ denote the \textit{post-decision configuration}, i.e., the configuration obtained immediately after applying $a$ to $\eta$. 

We now describe the transition from one robust configuration to the next. Starting from~$\eta^a$, the system evolves according to Kawasaki dynamics until the first particle interchange occurs. At low temperature, such an interchange can occur only along a susceptible bond, and each susceptible bond is equally likely to be selected.
After this interchange, the resulting configuration is mapped to a new robust configuration according to the following rules. Particles that become detached from the cluster are removed from the box. New particles may be inserted only if their attachment to the cluster reduces the energy by $2U$. Furthermore, if a particle can move at zero cost along the boundary of a cluster towards a corner, we first allow the entire bar to which this particle belongs to slide around that corner, if possible, before inserting new particles. A more explicit description of these dynamics is given in Section \ref{The auxiliary MDP}. 

We consider the following two reward functions:
\begin{align}
    r_1(\eta, a) &= \begin{cases}
        1, &\text{if } \eta = \eta^*, \\
        0, &\text{otherwise,} \label{eq:rew1}
    \end{cases} \quad \eta \in W, \quad a \in A(\eta),\\
    r_2(\eta, a) &= -[H(\eta^a) - H(\eta)], \quad \eta \in W, \quad a \in A(\eta).  \label{eq:rew2}
\end{align}
The function $r_1$ reflects the objective of reaching the full box as quickly as possible. Under this reward function, the value function is directly related to the first hitting time to this target configuration. This relation is given explicitly in \citep[Corollary 2.3]{JBL25}. More details on the interpretation of the value function are provided in \cite{JSC25}. In this work, it is shown that in the regime where $\lambda$ is close to 1, maximizing the value function is equivalent to minimizing the first hitting time to the target \citep[Theorem 1]{JSC25}. Furthermore, when $\lambda$ is close to 0, only trajectories that reach the target within a small number of epochs make a significant contribution to the value function \citep[Theorem 2]{JSC25}. Hence, policies that can reach the target configuration via short paths are favoured.  

The second reward function represents the energy cost of interchanging particles along the chosen bond. Note that from a robust configuration, any feasible interchange will increase the energy of the configuration. Hence, we continue to receive negative reward until the full box is reached. This implies that the function $r_2$ reflects the objective of reaching the full box at the lowest total discounted energy cost.

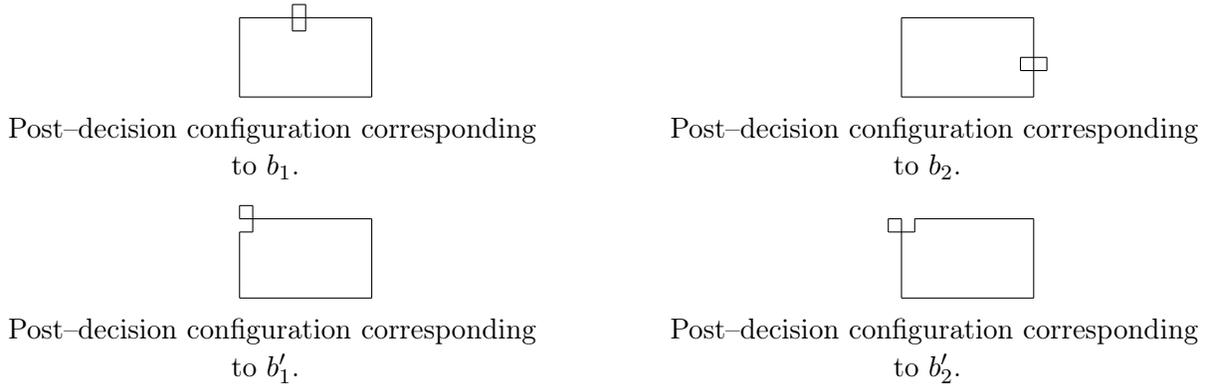
\begin{figure}
    \centering
    \captionsetup[subfigure]{justification=centering, labelformat=empty, singlelinecheck=false, width=1.2\linewidth}

    \begin{subfigure}[b]{0.45\textwidth}  
        \centering
        \begin{picture}(400,40)(-70,40)
		\thinlines
		\put(20,50){\line(1,0){50}}
		\put(20,80){\line(1,0){50}}
		\put(40,75){\line(0,1){10}}
		\put(40,85){\line(1,0){5}}
		\put(45,85){\line(0,-1){10}}
            \put(40,75){\line(1,0){5}}
		\put(70,50){\line(0,1){30}}
		\put(20,50){\line(0,1){30}}
		
        \end{picture}
        \vskip -0.3 cm
        {{\small Post--decision configuration corresponding to $b_1$.}}    
        \label{post_dec_b_1}
    \end{subfigure}
    \hfill
    \begin{subfigure}[b]{0.45\textwidth}   
        \centering 
        \begin{picture}(400,40)(-70,40)
		\thinlines
		\put(20,50){\line(1,0){50}}
		\put(20,80){\line(1,0){50}}
            \put(65,60){\line(1,0){10}}
            \put(75,60){\line(0,1){5}}
            \put(65,60){\line(0,1){5}}
            \put(75,65){\line(-1,0){10}}
		\put(20,50){\line(0,1){30}}
		\put(70,50){\line(0,1){30}}
            
		
        \end{picture}
        \vskip -0.3 cm
        {{\small Post--decision configuration corresponding to $b_2$.}}    
        \label{post_dec_b_2}
    \end{subfigure}
    
    \vskip\baselineskip

    \begin{subfigure}[b]{0.45\textwidth}
    \centering
       \begin{picture}(400,40)(-70,40)
		\thinlines
		\put(20,50){\line(1,0){50}}
		\put(20,80){\line(1,0){50}}
            \put(20,80){\line(0,1){5}}
            \put(25,80){\line(0,1){5}}
            \put(20,85){\line(1,0){5}}
		\put(20,50){\line(0,1){25}}
            \put(20,75){\line(1,0){5}}
            \put(25,75){\line(0,1){5}}
		\put(70,50){\line(0,1){30}}
		
        \end{picture}
        \vskip -0.3 cm
        {{\small Post--decision configuration corresponding to $b_1'$.}}    
        \label{post_dec_b_1'}
    \end{subfigure}
    \hfill
    \begin{subfigure}[b]{0.45\textwidth}
    \centering
       \begin{picture}(400,40)(-70,40)
		\thinlines
		\put(20,50){\line(1,0){50}}
		\put(25,80){\line(1,0){45}}
            
		\put(20,50){\line(0,1){30}}
            \put(15,75){\line(1,0){10}}
            \put(15,75){\line(0,1){5}}
            \put(15,80){\line(1,0){5}}
            \put(25,75){\line(0,1){5}}
		\put(70,50){\line(0,1){30}}
		
        \end{picture}
        \vskip -0.3 cm
        {{\small Post--decision configuration corresponding to $b_2'$.}}    
        \label{post_dec_b_2'}
    \end{subfigure}
    
    \caption{Post--decision configurations corresponding to the actions $b_1$, $b_2$, $b_1'$ and $b_2'$.}
    \label{post_dec_configs}
\end{figure}

\subsection{The auxiliary MDP}
\label{The auxiliary MDP}

Motivated by the geometrical characterization of the robust configurations given in Lemma \ref{robust_rectangle} above, we construct an auxiliary MDP, denoted by $(\hat{S}, \hat{A}, \hat{P}, \hat{r})$. To simplify the analysis, we consider a slightly modified version of the dynamics. Instead of Kawasaki dynamics with open boundary conditions, we consider its corresponding version with periodic boundary conditions, in which particles can be directly created or annihilated at sites in $\Lambda_0$ rather than at the boundary of $\Lambda$. This prevents the value function of the MDP to depend on the distance between the cluster and the boundary of $\Lambda$, which would otherwise introduce additional technicalities. In what follows, we justify why, for our purposes, the two models share the same features. During a metastable transition, both models reach the same type of critical configurations. This is because the two dynamics are local, and therefore the energy barriers associated with nucleation do not depend on whether particles must diffuse from the boundary or may be created directly in the bulk. Consequently, the dominant Arrhenius factor $\exp(\beta\Gamma^*)$ for some $\Gamma^*>0$ governing the escape rate from the metastable state is identical in the two models. The difference between the two dynamics is purely combinatorial/entropic: the bulk--injection model increases the number of distinct space--time windows that can realize a nucleation event (because injections may occur directly at sites that are favorable for nucleation instead of first having to diffuse in from the boundary). For fixed $L$ this affects only the multiplicative prefactor $K$ in the usual metastable estimate $\mathbb{E}[\tau]\sim K \exp(\beta\Gamma^*)$, where $\tau$ is the first hitting time to the stable state starting from the metastable one (see \cite[Chapter 16]{Bovier2015} for more details). Thus, since the goal is to accelerate the dynamics while preserving the correct metastable energy scale, we use this auxiliary MDP acting on a box with periodic boundary conditions, which we define as follows.

Recall that a configuration is robust if and only if it has one rectangular cluster with minimal side length of 2, by Lemma \ref{robust_rectangle}. In constructing the auxiliary MDP, we identify as equivalent all robust configurations with rectangular clusters of the same size $i \times j$, $i,j = 2, \ldots, L-2, L$ and represent such an \textit{equivalence class} by a state $(i,j)$. Hence, we define the state space $\hat{S}$ as
\begin{equation*}
    \hat{S} = \{(i,j)|i,j = 2, ..., L-2,L\}.
\end{equation*}
Given a state $(i,j) \in \hat{S}$, let the associated action space be given by 
\begin{equation*}
    \hat{A}(i,j) =
    \begin{cases}
        \{0\}, &\text{if } i = j = L, \\
        \{b_1\}, &\text{if } i = L, \quad j = 2, \ldots, L-2, \\
        \{b_2\}, &\text{if } i = 2, \ldots, L-2, \quad j = L, \\
        \{b_1', b_2, b_2'\}, &\text{if } i = 2, \quad j = 3, \quad \ldots, L-2, \\
        \{b_1, b_1', b_2'\}, &\text{if } i = 3, \ldots, L-2, \quad j = 2, \\
        \{b_1', b_2'\}, &\text{if } i = j = 2, \\
        \{b_1, b_1', b_2, b_2'\}, &\text{otherwise,}
    \end{cases}
\end{equation*}
where the actions $b_1$, $b_2$ correspond to interchanging particles along a bond at the horizontal and vertical side of the rectangle respectively, but not at the corner, and the actions $b_1'$, $b_2'$ correspond to interchanging particles along a bond at the horizontal and vertical side of one of the corners of the rectangle, respectively. The action $0$ again corresponds to doing nothing. The post--decision configurations belonging to actions $b_1$, $b_2$, $b_1'$ and $b_2'$ are illustrated in Figure \ref{post_dec_configs}. Those corresponding to actions $b_1$ and $b_2$ can be defined as the configurations obtained from $(i,j)\in\hat S$ by detaching a particle, not from a corner, towards north/south and east/west, respectively, while those corresponding to actions $b_1'$ and $b_2'$ can be defined as the configurations obtained from $(i,j)\in\hat S$ by detaching a particle from one corner towards north/south and east/west, respectively. Let $\hat{A} = \cup_{(i,j) \in \hat{S}} \hat{A}(i,j)$.

Given a starting state $(i,j) \in \hat{S}$, we define the following transition probabilities to the next robust configuration:
\begin{align}
    \label{trans_probs_b1} \hat{P}((i',j')|(i,j), b_1) &= \begin{cases}
            2/7, &\text{if } (i',j') = (i,j), \\
            5/7, &\text{if } (i',j')=(i,j+1), \\
            0, &\text{otherwise,}
        \end{cases} \quad \text{for }
        \begin{aligned}
            &i = 3, \ldots, L,\\
            &j = 2 \ldots L-3,
        \end{aligned}\\
    \label{trans_probs_b2} \hat{P}((i',j')|(i,j), b_2) &= \begin{cases}
            2/7, &\text{if } (i',j') = (i,j), \\
            5/7, &\text{if } (i',j')=(i+1,j), \\
            0, &\text{otherwise,}
    \end{cases} \quad \text{for }
    \begin{aligned}
        &i = 2, \ldots, L-3, \\
        &j = 3, \ldots L, 
    \end{aligned}\\
     \label{trans_probs_b1_prime1} \hat{P}((i',j')|(i,j), b_1') &= 
     \begin{cases}
            1/2, &\text{if } (i',j') = (i,j), \\
            1/2, &\text{if } (i',j')=(i,j+1),\\
            0, &\text{otherwise,}
    \end{cases} 
    \quad \text{for }
    \begin{aligned}
        &i = 2, \ldots L-2, \\
        &j = i \ldots L-3, 
    \end{aligned}\\
    \label{trans_probs_b1_prime2} \hat{P}((i',j')|(i,j), b_1') &= 
     \begin{cases}
            1/2, &\text{if } (i',j') = (i,j), \\
            1/3, &\text{if } (i',j')=(i,j+1), \\
            1/6, &\text{if } (i',j')=(i-1,j+1),\\
            0, &\text{otherwise,}
    \end{cases} \quad \text{for } 
    \begin{aligned}
        &i = 2 \ldots L-2, \\
        &j = 2, \ldots, i-1, 
    \end{aligned}\\
    \label{trans_probs_b2_prime1} \hat{P}((i',j')|(i,j), b_2') &= 
     \begin{cases}
            1/2, &\text{if } (i',j') = (i,j), \\
            1/2, &\text{if } (i',j')=(i+1,j), \\
            0, &\text{otherwise,}
    \end{cases} \quad \text{for }
     \begin{aligned}
         &i = j, \ldots, L-3, \\
         &j = 2, \ldots, L-2,
    \end{aligned}\\
    \label{trans_probs_b2_prime2} \hat{P}((i',j')|(i,j), b_2') &= 
     \begin{cases}
            1/2, &\text{if } (i',j') = (i,j), \\
            1/3, &\text{if } (i',j')=(i+1,j), \\
            1/6, &\text{if } (i',j')=(i+1,j-1),\\
            0, &\text{otherwise,}
    \end{cases} \quad \text{for }
    \begin{aligned}
        &i = 2, \ldots, j-1, \\
        &j = 2, \ldots, L-2, \\
    \end{aligned}\\
    \label{trans_probs_b1_Lmin2} \hat{P}((i',j')|(i,L-2), b_1) &= 
    \begin{cases}
        1/7, &\text{if } (i',j') = (i,L-2), \\
        6/7, &\text{if } (i',j') = (i,L), \\
        0, &\text{otherwise,}
    \end{cases} \quad i = 3, \ldots, L, \\
    \label{trans_probs_b2_Lmin2} \hat{P}((i',j')|(L-2, j), b_2) &= 
    \begin{cases}
        1/7, &\text{if } (i',j') = (L-2,j), \\
        6/7, &\text{if } (i',j') = (L,j), \\
        0, &\text{otherwise,}
    \end{cases} \quad j = 3, \ldots, L, \\ 
    \label{trans_probs_b1_prime_Lmin2} \hat{P}((i',j')|(i,L-2), b_1') &= 
    \begin{cases}
        1/3, &\text{if } (i',j') = (i,L-2), \\
        2/3, &\text{if } (i',j') = (i,L), \\
        0, &\text{otherwise,}
    \end{cases} \quad i = 2, \ldots, L-2, \\
    \label{trans_probs_b2_prime_Lmin2} \hat{P}((i',j')|(L-2, j), b_2') &= 
    \begin{cases}
        1/3, &\text{if } (i',j') = (L-2,j), \\
        2/3, &\text{if } (i',j') = (L,j), \\
        0, &\text{otherwise,}
    \end{cases} \quad j = 2, \ldots, L-2.
\end{align}

    \begin{figure}
\centering
\includegraphics[width=0.3\linewidth]{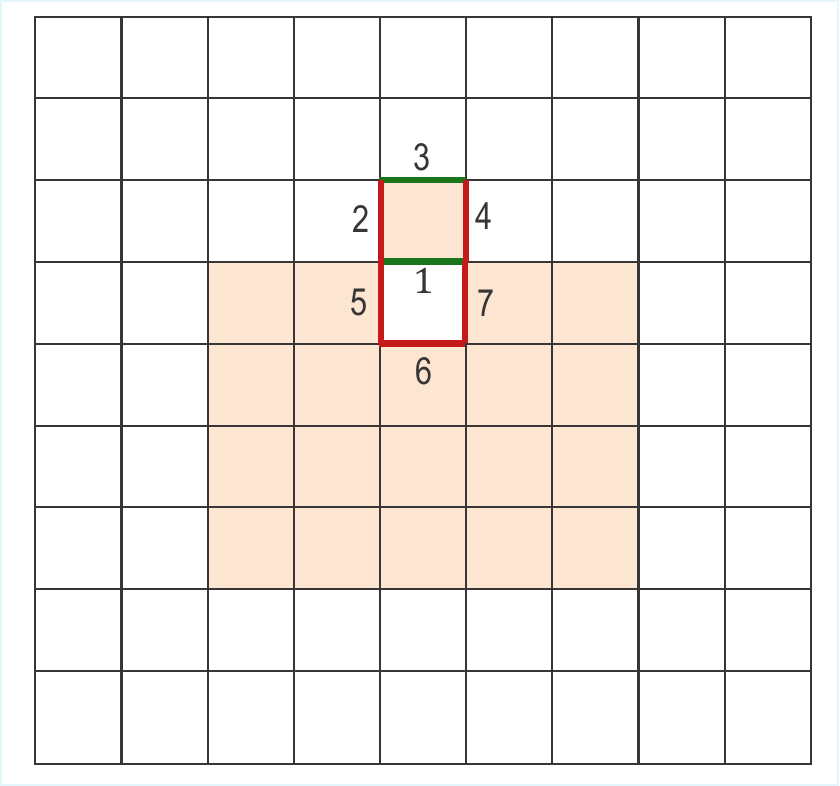}
\hspace{2mm}
 \includegraphics[width=0.3\linewidth]{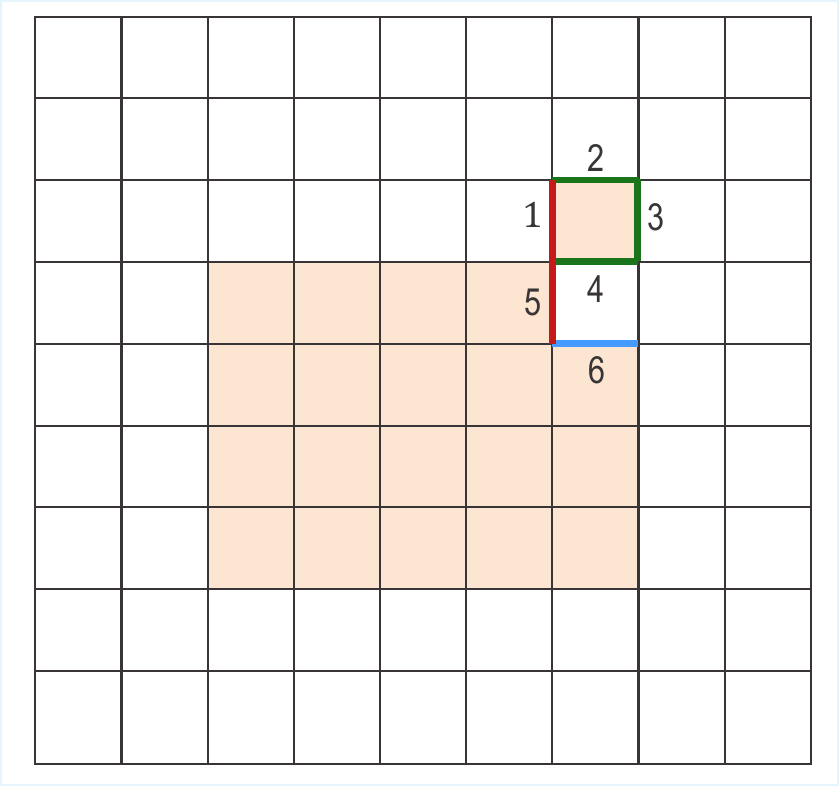}
   \\ \vspace{2mm}
   \includegraphics[width=0.3\linewidth]{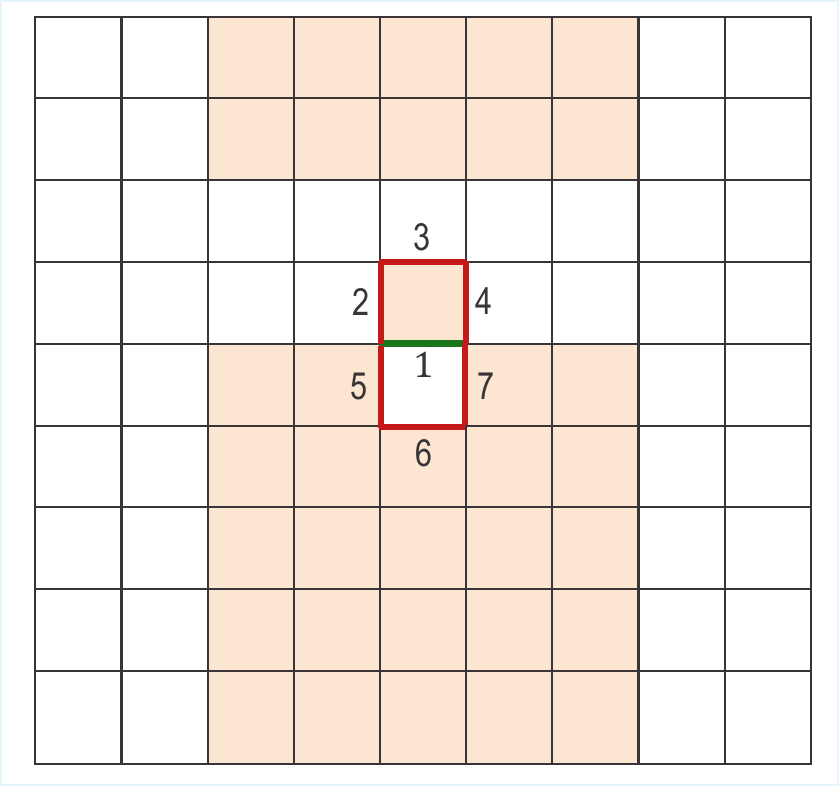}
\hspace{2mm}
    \includegraphics[width=0.3\linewidth]{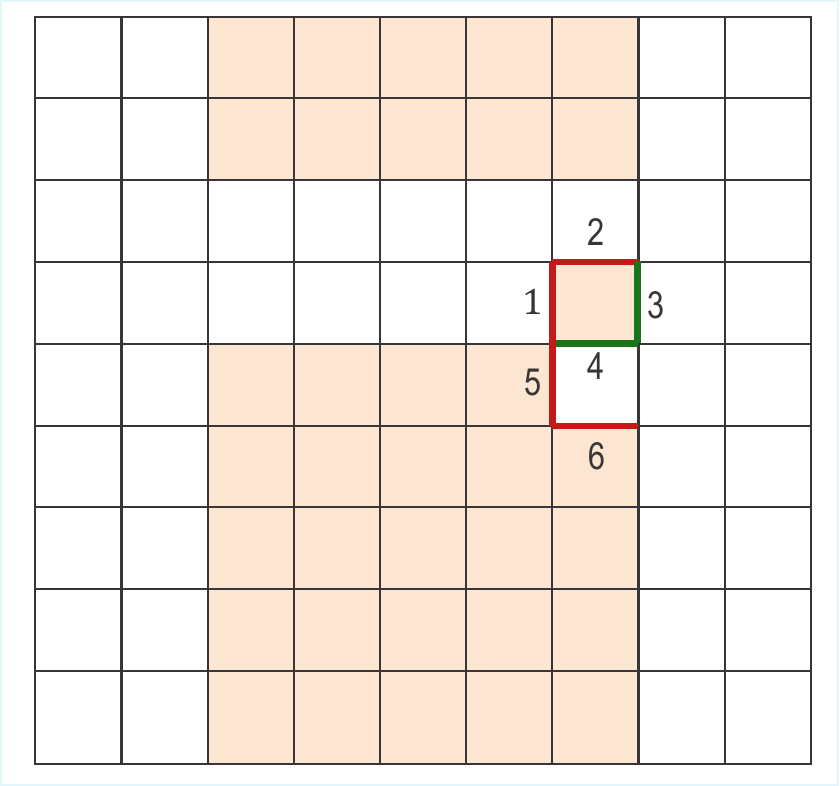}
\caption
    {\small Post-decision configurations after taking actions $b_1$ and $b_1'$ from a state $(i,j)$, $j < L-2$ (first row) and from state $(i, L-2)$ (second row).}
   \label{fig_trans_probs}
\end{figure}

In what follows, we justify expressions \eqref{trans_probs_b1}, \eqref{trans_probs_b1_prime1}, \eqref{trans_probs_b1_prime2}, \eqref{trans_probs_b1_Lmin2} and \eqref{trans_probs_b1_prime_Lmin2} corresponding to actions $b_1$ and $b_1'$. Analogous reasoning directly yields expressions \eqref{trans_probs_b2},  \eqref{trans_probs_b2_prime1}, \eqref{trans_probs_b2_prime2}, \eqref{trans_probs_b2_Lmin2}, \eqref{trans_probs_b2_prime_Lmin2}. 

   We first consider expression (\ref{trans_probs_b1}). After taking action $b_1$, there are seven susceptible bonds, as shown in Figure \ref{fig_trans_probs} (upper left). In the figure, the susceptible bonds are highlighted and labeled from 1 to 7. These are the only bonds whose selection leads to an interchange of particles at zero cost. Note that each of these bonds has an equal probability of being the first selected by the Kawasaki dynamics within this set. Recall that, after the first interchange of particles, any particles that become detached from the cluster are removed. Also, the decision maker can insert new particles only if the attachment of these particles leads to an energy decrease of $2U$. Hence, if one of the bonds labeled by 1 or 3 (colored green) in Figure \ref{fig_trans_probs} (upper left) is selected first, the system returns to state $(i,j)$. If instead one of the bonds labeled by 2, 4, 5, 6, or 7 (colored red) is selected first, new particles can be inserted and attached to the cluster until a rectangle of size $i \times (j+1)$ is formed. From this argument, we obtain expression (\ref{trans_probs_b1}).  

   Now consider expressions (\ref{trans_probs_b1_prime1}) and (\ref{trans_probs_b1_prime2}). Taking action $b_1'$ results in six susceptible bonds, as illustrated in Figure \ref{fig_trans_probs} (upper right). If one of the bonds labeled 2, 3 or 4 (colored green) is selected first, the system returns to the original rectangle. If instead one of the bonds 1 or 5 (colored red) is selected, new particles attach to the cluster to form a rectangle of size $i \times (j+1)$. If bond 6 is selected first, two possible evolutions may occur. Either new particles are inserted immediately, yielding a rectangle of size $i \times (j+1)$, or the vertical bar containing the moved particle first slides around the corner before particle insertion. This sliding mechanism is allowed only if $j < i$. Accordingly, if $j \geq i$, particles are inserted immediately, resulting in a rectangle of size $i \times (j+1)$. If $j < i$, the vertical bar first slides around the corner, after which particles are inserted, yielding a rectangle of size $(i-1) \times (j+1)$.  

   Finally, a similar argument leads to \eqref{trans_probs_b1_Lmin2} and \eqref{trans_probs_b1_prime_Lmin2}, see Figure \ref{fig_trans_probs} (lower row). The reward functions $\hat{r}_1: \hat{S} \times \hat{A} \rightarrow \mathbb{R}$ and $\hat{r}_2: \hat{S} \times \hat{A} \rightarrow \mathbb{R}$ are defined analogously to expressions (\ref{eq:rew1}) and (\ref{eq:rew2}), i.e.,
   \begin{align}
    \hat{r}_1((i,j), a) &= \begin{cases}
        1, &\text{if } (i,j) = (L, L), \\
        0, &\text{otherwise,} \label{eq:rew1_aux}
    \end{cases} \quad (i,j) \in \hat{S}, \quad a \in \hat{A},\\
    \hat{r}_2((i,j), a) &= -[H(\eta^{a'}) - H(\eta)], \quad (i,j) \in \hat{S}, \quad a \in \hat{A},  \label{eq:rew2_aux}
\end{align}
where $\eta$ is any robust configuration corresponding to state $(i,j)$ and $a'$ is any bond corresponding to $a$ in this configuration. Note that the energy difference defining $\hat{r}_2$ is independent of the specific choices of $\eta$ and $a'$ within the equivalence classes induced by $(i,j)$ and $a$, and hence $\hat{r}_2$ well-defined. 

\subsection{Main results}
    In this section, we state the main results of the paper. 

    \begin{thm}\label{thm1}
     Given the reward function $\hat{r}_1: \hat{S} \times \hat{A} \rightarrow \mathbb{R}$, as defined in expression \eqref{eq:rew1_aux}, a stationary and deterministic policy $\pi^* = (d^*)^{\infty}$ is optimal in the auxiliary MDP if and only if $d^*(i,j) \in A^*(i,j)$ for all $(i,j) \in \hat{S}$, where, letting $P(\hat{A})$ denote the power set of the action space $\hat{A}$, the function $A^*: \hat{S} \rightarrow P(\hat{A})$ is defined as
    \[
        A^*(i,j) = 
        \begin{cases}
            \{b_1\}, &\text{if } i = L, \quad j = 2, \ldots, L-2, \\
            \{b_2\}, &\text{if } i = 2, \ldots, L-2, \quad j = L, \\
             \{b_1, b_2\}, &\text{if } i,j = 2, \ldots, L-2.
        \end{cases}
    \]
\end{thm}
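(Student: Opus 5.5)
The plan is to compute the optimal value function $v^*_\lambda$ in closed form and then apply Theorem \ref{optimality_thm}: a candidate function solves the Bellman equations, and in each state we read off which actions attain the maximum. Since $\hat r_1$ is nonzero only at $(L,L)$, which is absorbing under the action $0$, we have $v^*_\lambda(L,L)=1/(1-\lambda)$, and every other state has zero immediate reward.

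The elementary building block is a one-step "advance or stay" action. Suppose it moves the state to a target with value $w$ with probability $p$ and otherwise leaves it unchanged. If the action is repeated until the move happens, its value is
\[
f_p(w)=\frac{\lambda p\, w}{1-\lambda(1-p)}.
\]
This is strictly increasing in $p$ and linear in $w$. We therefore set
\[
\varphi:=f_{5/7}(1)=\frac{5\lambda/7}{1-2\lambda/7}, \qquad \psi:=f_{6/7}(1)=\frac{6\lambda/7}{1-\lambda/7}.
\]
For each coordinate $k\in\{2,\dots,L-2,L\}$, let $n(k)$ be the number of unit steps $k\to k+1$ needed to reach $L-2$, that is $n(k)=L-2-k$, with $n(L)=0$ and no final jump when $k=L$. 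Let $\mathbf 1_{k<L}$ indicate that the final jump $L-2\to L$ is still required. The candidate value is
\[
v(i,j)=\frac{1}{1-\lambda}\,\varphi^{\,n(i)+n(j)}\,\psi^{\mathbf 1_{i<L}+\mathbf 1_{j<L}} .
\]
This is exactly the value of any policy that uses only $b_1$ and $b_2$ (where available), because the factors commute and the order of growth in the two directions is irrelevant. In particular $v$ is symmetric, and in each state the unprimed actions $b_1$ and $b_2$ give the same value, since both multiply the same product by one factor of $\varphi$ or $\psi$.

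Next I would verify that $v$ satisfies the Bellman equation by showing that no primed action improves on it. Since $v$ is increasing in each coordinate and $\lambda<1$, it suffices to compare single-step values. For $b_1'$ without sliding, the value is $f_{1/2}(v(i,j+1))$, which is strictly less than $f_{5/7}(v(i,j+1))$. With sliding (the case $j<i$), the value $\tfrac{\lambda}{1-\lambda/2}\bigl(\tfrac13 v(i,j+1)+\tfrac16 v(i-1,j+1)\bigr)$ is smaller still, because $v(i-1,j+1)<v(i,j+1)$. At the level $L-2$ the comparison is $f_{2/3}$ versus $f_{6/7}$, which goes the same way. All these inequalities are strict, so by Theorem \ref{optimality_thm} a stationary deterministic policy is optimal if and only if it selects a maximizing action in every state. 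This gives the characterization $A^*$, and the ``only if'' direction follows from the strictness.

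The main obstacle is the edge states where the unprimed actions are not both available: when $i=2$ or $j=2$, and especially $(2,2)$, where only $b_1'$ and $b_2'$ exist. There the product formula is not the value of any admissible policy. I would handle these states by backward induction on $n(i)+n(j)$, computing $v^*_\lambda$ directly, and I would read the statement as $A^*(i,j)\cap\hat A(i,j)$. I expect these boundary cases to force the most careful bookkeeping: the formula for $v$ must be modified along the lines $i=2$ and $j=2$, and I would need to check that the monotonicity and strict inequalities above still hold after this modification.
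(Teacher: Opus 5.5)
Your core argument is sound, and it takes a different route from the paper. The paper fixes a policy with $d(i,j)\in A^*(i,j)$, writes recursions for $v^{\pi}_{\lambda}$, and reduces the Bellman comparisons to the linear inequalities \eqref{ineqq1}--\eqref{ineqq8} between neighbouring values. It then proves each inequality separately by backward induction on explicit rational functions of $\lambda$.

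You instead write $v$ in closed product form and reduce every comparison to $f_p<f_{p'}$ for $p<p'$, plus monotonicity of $v$ for the sliding branch. This is legitimate because, at a state $s$ with zero reward, $Q(s,a)-v(s)=(1-\lambda p_{\mathrm{stay}}(a))\,(g_a-v(s))$, where $g_a$ is your ``repeat until it moves'' value. Your route is shorter and explains why the policy works. The tie between $b_1$ and $b_2$ is just commutativity of the factors $\varphi,\psi$, and for instance \eqref{ineqq6} collapses to $16(1-\varphi)\,v(i,j+1)>0$.

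The gap is in the boundary analysis, which you both misplace and leave open.

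\textbf{The lines $i=2$ and $j=2$ need no modification.} On $i=2$, $j\ge 3$ (and symmetrically on $j=2$, $i\ge 3$), $b_2$ is available, so your formula is the value of an admissible policy. Since $v(2,j+1)=v(3,j)$ and $v(2,L-2)=\psi\,v(2,L)$ are identities of the formula, the comparisons $f_{1/2}(v(3,j))<f_{5/7}(v(3,j))=v(2,j)$ and $f_{2/3}(v(2,L))<f_{6/7}(v(2,L))=v(2,L-2)$ go through verbatim.

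\textbf{The real problem is the single state $(2,2)$.} There $\hat A(2,2)=\{b_1',b_2'\}$ is disjoint from $A^*(2,2)=\{b_1,b_2\}$. Your reading $A^*\cap\hat A$ is then empty, so the ``if and only if'' would assert that no stationary deterministic policy is optimal. That is false for a finite discounted MDP, so the theorem cannot be proved as written; the paper's proof passes over this as well.

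What is true, and what you should state explicitly, is the following. The optimal value is $v^*_\lambda(2,2)=\frac{\lambda}{2-\lambda}\,v(2,3)$, strictly below the product value $\varphi\,v(2,3)$. Both $b_1'$ and $b_2'$ are optimal at $(2,2)$, and they tie because $v(2,3)=v(3,2)$.

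This correction does not propagate. Every transition out of a state other than $(2,2)$ lands on a state with some coordinate at least $3$. Hence $(2,2)$ is reachable only from itself, and your verification at all other states is unaffected.
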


\begin{thm}\label{thm2}
 Given the reward function $\hat{r}_2: \hat{S} \times \hat{A} \rightarrow \mathbb{R}$, as defined in expression \eqref{eq:rew2_aux}, a stationary and deterministic policy $\pi^* = (d^*)^{\infty}$ is optimal in the auxiliary MDP if and only if $d^*(i,j) \in A^*(i,j)$ for all $(i, j) \in \hat{S}$, where, letting $P(\hat{A})$ denote the power set of the action space $\hat{A}$, the function $A^*: \hat{S} \rightarrow P(\hat{A})$ is defined as
\begin{equation*}
    A^*(i,j) = \begin{cases}
        \{b_1\}, &\text{if } i = L, \quad j = 2, \ldots, L-2, \\
        \{b_2\}, &\text{if } i = 2, \ldots, L-2, \quad j = L, \\
        \{b_1'\}, &\text{if } i = L-2, \quad j = 2, \ldots, L-2, \\
        \{b_2'\}, &\text{if } i = 2, \ldots, L-2,  \quad j = L-2, \\
        \{b_1', b_2'\}, &\text{otherwise.}
    \end{cases}
\end{equation*}
\end{thm}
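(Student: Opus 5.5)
The plan is to apply Theorem \ref{optimality_thm}: construct the value function $v^*$ of the candidate policy explicitly and check that it satisfies the Bellman equations, with \emph{strict} inequality for every action outside $A^*(i,j)$. Strictness gives the ``only if'' direction.

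\textbf{Step 1: compute the rewards.} In a rectangle with both sides at least $2$:
\begin{itemize}
\item a non-corner boundary particle has three bonds, and after the move it keeps one, so $\hat r_2(\cdot,b_1)=\hat r_2(\cdot,b_2)=-2U$;
\item a corner particle has two bonds and keeps one, so $\hat r_2(\cdot,b_1')=\hat r_2(\cdot,b_2')=-U$;
\item at $(L,L)$ the only action is $0$, which has reward $0$, so $v(L,L)=0$.
\end{itemize}
For an action $a$ with self-loop probability $p_a$, cost $c_a$ and forward kernel $q_a$, the policy value satisfies
\[
v(i,j)=\frac{-c_a+\lambda\sum_{s'\neq(i,j)}q_a(s')v(s')}{1-\lambda p_a}.
\]
Corner actions have per-attempt cost $U/(1-\lambda/2)$, against $2U/(1-2\lambda/7)$ for side actions. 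The former is always smaller for $\lambda\in(0,1)$, since $1-\lambda/2<2-4\lambda/7$. However, corner actions have a lower probability of progressing, so a direct comparison is needed rather than a cost-only argument.

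\textbf{Step 2: order the states and compute $v^*$ by backward induction.} Every transition from $(i,j)$ goes either to itself or to a state with $i+j$ weakly larger. In the weakly-larger case, the sliding move $(i,j)\mapsto(i-1,j+1)$ keeps $i+j$ fixed but strictly increases $j-i$ in the relevant regime $j<i$. Hence the lexicographic order on $(i+j,\,|i-j|)$, or more simply on $(i+j,\,j)$ for the $b_1'$ slide, is a valid induction order. We also use the symmetry $v(i,j)=v(j,i)$.
\begin{itemize}
\item \emph{Edge rows $i=L$.} Only $b_1$ is available. Solve the one-dimensional recursion from $v(L,L-2)=(-2U+\tfrac67\lambda\cdot 0)/(1-\lambda/7)$ downward using \eqref{trans_probs_b1}.
\item \emph{Rows $i=L-2$.} Compare $b_1'$, which goes to $(L-2,L)$ when $j=L-2$, with $b_2'$ and $b_2$, which jump to $(L,j)$. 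The aim is to show that growing the short direction $j$ by corners beats completing $i$ to $L$. Completing $i$ first forces the remaining $L-j$ steps to be side moves of cost $2U$, because only $b_1$ exists at $i=L$. This is the mechanism behind the entries $\{b_1'\}$ and $\{b_2'\}$ at $i=L-2$ and $j=L-2$ respectively.
\item \emph{Generic states.} Assume inductively that $v^*$ is known on all later states. At a generic state, compare the four Bellman terms.
\end{itemize}

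\textbf{Step 3: the comparison $b'$ versus $b$ (main obstacle).} The heart of the argument is a comparison lemma: for every state at which both are available, the $b_1'$-term strictly exceeds the $b_1$-term, and likewise for $b_2'$ versus $b_2$. The plan is to bound the differences $v^*(i,j+1)-v^*(i,j)$ from above. Roughly, these increments should be of order $U/(1-\lambda)$ times a contraction factor, and I would establish this bound simultaneously with the induction. Plugging the bound into the explicit fractions then reduces the claim to an elementary inequality in $\lambda$.

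Two features make this delicate:
\begin{itemize}
\item the slide transition to $(i-1,j+1)$ occurs with probability $1/6$ when $j<i$, and we must show it does not hurt. A plausible route is monotonicity of $v^*$ along anti-diagonals;
\item the boundary rows $j=L-2$ and $i=L-2$, where the jump to $L$ changes the probabilities to $6/7$ versus $2/3$.
\end{itemize}
I would try first to prove $v^*(i-1,j+1)\ge v^*(i,j)$ for $j<i$ by induction on the same order.

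\textbf{Step 4: tie between $b_1'$ and $b_2'$, and conclusion.} Finally, show $b_1'$ and $b_2'$ give \emph{equal} values at generic states, via symmetry and the explicit formula; this gives the set $\{b_1',b_2'\}$. Also show strict preference at the rows $i=L-2$ and $j=L-2$. Combined with Theorem \ref{optimality_thm}, these strict inequalities yield the stated characterization of the optimal stationary deterministic policies.
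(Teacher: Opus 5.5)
Your architecture matches the paper's: evaluate the candidate policy in closed form, then check the Bellman equations with strict inequality off $A^*$ by backward induction from the states next to $(L,L)$. However, two steps fail.

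\textbf{Rewards.} The values in Step~1 are wrong. Every empty site adjacent to a rectangular cluster has exactly one occupied neighbour, namely the site the particle just vacated, so the moved particle ends with no bonds. Hence $\hat r_2(\cdot,b_1)=\hat r_2(\cdot,b_2)=-3U$ and $\hat r_2(\cdot,b_1')=\hat r_2(\cdot,b_2')=-2U$. These are the values behind \eqref{rec_2}, \eqref{rec_4} and the comparison $-2U+\cdots>-3U+\cdots$ at $(L-2,L-2)$. The reward gap is still $U$, but every value of $v^*$ changes. Your per-attempt comparison becomes $2U/(1-\lambda/2)<3U/(1-2\lambda/7)$, i.e.\ $13\lambda<14$, which has almost no slack as $\lambda\to1$.

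\textbf{The slide branch.} This is the decisive problem. The $1/6$ slide branch of \eqref{trans_probs_b1_prime2}, and of its mirror \eqref{trans_probs_b2_prime2}, breaks Steps~3 and~4. Symmetry gives the $b_1'$/$b_2'$ tie only on the diagonal, where both corner actions use $(1/2,1/2)$ kernels. At $(i,j)$ with $j<i\le L-3$, the tie requires $2v(i,j+1)+v(i-1,j+1)=3v(i+1,j)$, and nothing in your plan provides this.

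Worse, the Step~3 lemma is false for $\lambda$ near $1$. At $(L-2,L-3)$ the slide lands in $(L-3,L-2)$, which has the same value $v_0$ by symmetry. So for every policy with $d\in A^*$,
\[
v_0=\frac{-6U+\lambda v_1}{3-2\lambda},\qquad v_1=v(L-2,L-2)=-\frac{6(7+6\lambda)}{(7-\lambda)(3-\lambda)}\,U .
\]
The $b_1$-term minus the $b_1'$-term in the Bellman equation then equals $\bigl[(30\lambda-21)U+8\lambda(1-\lambda)v_1\bigr]/\bigl(7(3-2\lambda)\bigr)$. This tends to $9U/7>0$ as $\lambda\to1$; numerically it is already positive at $\lambda=0.95$. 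In undiscounted terms, $b_1'$ advances only with probability $1/3$ there, so it pays $6U$ per unit of progress against $21U/5$ for $b_1$. The same happens with your reward values.

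So, with the kernels as literally defined, the inequality you need is false for $\lambda$ close to $1$ and every $L\ge5$, and no bound on increments will rescue it. The paper's proof avoids this obstacle only because its recursions \eqref{rec_5}--\eqref{rec_6} evaluate each corner action with the kernel $(1/2,1/2)$, i.e.\ they drop the slide branch. Under that simplification the tie collapses to \eqref{ineq_5}, and the $b'$-versus-$b$ comparisons become the one-step inductions \eqref{ineq_2}--\eqref{ineq_6}.

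If you adopt that no-slide evaluation explicitly, your plan becomes the paper's argument. Proving the statement for the MDP as defined would instead require changing $A^*$.
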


It is apparent from Theorems \ref{thm1} and \ref{thm2} that the two reward functions induce optimal strategies with fundamentally different structural properties. The function $\hat{r}_1$, which rewards only reaching the full-box, leads to a strategy that prescribes interchanging particles along bonds that are not located at the corners of the rectangle. By contrast, the reward function $r_2$, which is based on the energy cost of particle interchanges, yields an optimal strategy that prescribes particle interchanges along corner bonds. 
This distinction is consistent with the underlying transition probabilities and energy costs. After selecting a non-corner bond, the probability of gaining an additional slice of particles is higher than after selecting a corner bond, which justifies the strategy induced by $\hat{r}_1$. Conversely, interchanging particles along a corner bond incurs a lower energy cost than interchanging along a non-corner bond, a fact that is reflected in the optimal strategy induced by $\hat{r}_2$.

\section{Proof of the main results}
\label{sec:proof}

\subsection{Proof of Theorem \ref{thm1}}

    Let $\pi = (d)^{\infty}$, where $d(i,j) \in A^*(i,j)$. It follows that $v^{\pi}_{\lambda}:\hat{S} \times \hat{A} \rightarrow \mathbb{R}$ satisfies the following recursive expressions:
    \begin{align}
        \label{pi1_rec1} &v^{\pi}_{\lambda}(L, L) = \dfrac{1}{1-\lambda}, \\
        \label{pi1_rec2} &v^{\pi}_{\lambda}(L, L-2) = \dfrac{6\lambda}{7-\lambda}v^{\pi}(L, L), \\
        \label{pi1_rec3} &v^{\pi}_{\lambda}(L-2, L-2) = \dfrac{6\lambda}{7-\lambda}v^{\pi}(L-2, L), \\
        \label{pi1_rec4} &v^{\pi}_{\lambda}(i, j) = \dfrac{5\lambda}{7-2\lambda}v^{\pi}(i, j+1), \quad i = 3, \ldots, L, \quad j = 3, \ldots, L-3, \quad i \geq j.
    \end{align}

    We show that for any $s = (i,j) \in \hat{S}$, we have
    \begin{equation}\label{Bellman_equations}
       \hat{r}_1(s, a) + \lambda \sum\limits_{s' \in \hat{S}} \hat{P}(s'|s, a)v^{\pi}_{\lambda}(s')< \hat{r}_1(s, a') + \lambda \sum\limits_{s' \in \hat{S}} \hat{P}(s'|s, a')v^{\pi}_{\lambda}(s'),
    \end{equation} 
    for all $a \in A^*(i,j)$, $a' \notin A^*(i,j)$ and 
    \begin{equation}\label{Bellman_equations_2}
        \hat{r}_1(s, a) + \lambda \sum\limits_{s' \in \hat{S}} \hat{P}(s'|s, a)v^{\pi}_{\lambda}(s')= \hat{r}_1(s, a') + \lambda \sum\limits_{s' \in \hat{S}} \hat{P}(s'|s, a')v^{\pi}_{\lambda}(s'),
    \end{equation}
    for all $a, a' \in A^*(i,j)$. 
    For states of the form $(L,j)$, $j = 2, \ldots, L-2$, the statement is obvious, since $b_1$ is the only available action. Equation \eqref{Bellman_equations} for the state $(L-2,L-2)$ becomes
    \[
    \dfrac{\lambda}{7}v^{\pi}_{\lambda}(L-2, L-2) + \dfrac{6\lambda}{7}v^{\pi}_{\lambda}(L, L-2) > \dfrac{\lambda}{3}v^{\pi}_{\lambda}(L-2, L-2) + \dfrac{2\lambda}{3}v^{\pi}_{\lambda}(L, L-2),
    \]
    which reduces to 
    \begin{equation}\label{ineqq1}
    v^{\pi}_{\lambda}(L, L-2) - v^{\pi}_{\lambda}(L-2, L-2) > 0.
    \end{equation}
For $(L-2, j)$, $j = 2, \ldots, L-3$, expressions (\ref{Bellman_equations}) and (\ref{Bellman_equations_2}) reduce to
\begin{align}
    \label{ineqq2}&v^{\pi}_{\lambda}(L-2, j+1) - v^{\pi}_{\lambda}(L-2,j) > 0, \\
    \label{ineqq3}&v^{\pi}_{\lambda}(L, j) - v^{\pi}_{\lambda}(L-2, j) > 0, \\
    \label{ineqq4}&v^{\pi}_{\lambda}(L-2,j) +5 v^{\pi}_{\lambda}(L-2, j+1) - 6v^{\pi}_{\lambda}(L, j) = 0.
\end{align}
For states $(i,i)$, $i = 2, \ldots, L-3$, we obtain
\begin{align}
    \label{ineqq5}&v^{\pi}_{\lambda}(i,i+1) - v^{\pi}_{\lambda}(i,i) > 0.
\end{align}
Finally, for states $(i,j)$, $i,j  = 2, \ldots, L-3$, $i > j$, expressions (\ref{Bellman_equations}) and (\ref{Bellman_equations_2}) can be written as:
\begin{align}
    \label{ineqq6}& -9v^{\pi}_{\lambda}(i,j) + 16v^{\pi}_{\lambda}(i, j+1) -7v^{\pi}_{\lambda}(i-1, j+1) > 0, \\
    \label{ineqq7}& v^{\pi}_{\lambda}(i+1,j) - v^{\pi}_{\lambda}(i,j) > 0, \\
    \label{ineqq8}& v^{\pi}_{\lambda}(i+1,j) = v^{\pi}_{\lambda}(i,j+1).
\end{align}

\paragraph{\textbf{Proof of expression (\ref{ineqq1})}:}
    Using expressions \eqref{pi1_rec1}--\eqref{pi1_rec3}, we obtain
    \begin{align}
        \label{pi1_LLmin2}v^{\pi}_{\lambda}(L, L-2) &= \dfrac{6\lambda}{(1-\lambda)(7-\lambda)},\\
        \label{pi1_Lmin2Lmin2}v^{\pi}_{\lambda}(L-2, L-2) &= \dfrac{36\lambda^2}{(1-\lambda)(7-\lambda)^2}.
    \end{align}   
The correctness of inequality (\ref{ineqq1}) follows directly from inserting these explicit expressions.
\vspace{5mm}
\paragraph{\textbf{Proof of expression (\ref{ineqq2})}:}
We prove the validity of expression (\ref{ineqq2}) by induction over $j$. Consider $j = L-3$. Using expressions (\ref{pi1_Lmin2Lmin2}) and (\ref{pi1_rec4}), we obtain
\begin{equation}\label{pi1_Lmin2Lmin3}
    v^{\pi}_{\lambda}(L-2, L-3) = \dfrac{180 \lambda^3}{(7-2\lambda)(1-\lambda)(7-\lambda)^2}.
\end{equation}
Inserting expressions (\ref{pi1_Lmin2Lmin2}) and (\ref{pi1_Lmin2Lmin3}) into inequality (\ref{ineqq2}) yields its correctness for $j = L-3$. Now, suppose that it holds for $j = k+1$, for some $k = 2, \ldots, L-4$. For $j = k$, we then obtain, using expression (\ref{rec_4})
\begin{equation*}
    v^{\pi}_{\lambda}(L-2, k+1) - v^{\pi}_{\lambda}(L-2,k) = \dfrac{5\lambda}{7-2\lambda}(v^{\pi}_{\lambda}(L-2, k+2) - v^{\pi}_{\lambda}(L-2, k+1)) > 0,
\end{equation*}
by the induction hypothesis. Thus, expression (\ref{ineqq2}) holds for all $j = 2, \ldots, L-3$.
\vspace{5mm}
\paragraph{\textbf{Proof of expression (\ref{ineqq3})}:}
We prove inequality (\ref{ineqq3}) again by means of induction over $j$. Consider $j = L-3$. Using expression (\ref{rec_4}), we obtain
\begin{equation}\label{pi1_LLmin3}
    v^{\pi}_{\lambda}(L, L-3) = \dfrac{30\lambda^2}{(7-2\lambda)(1-\lambda)(7-\lambda)}.
\end{equation}
The validity of expression (\ref{ineqq3}) for $j = L-3$ follows directly from inserting expressions (\ref{pi1_LLmin3}) and (\ref{pi1_Lmin2Lmin3}). We proceed to assume that it is correct for $j = k+1$, for some $k = 2, \ldots, L-4$. This yields for $j = k$, using expression (\ref{pi1_rec4}),
\begin{equation}
    v^{\pi}_{\lambda}(L,k) - v^{\pi}_{\lambda}(L-2,k) = \dfrac{5\lambda}{7-2\lambda}(v^{\pi}_{\lambda}(L, k+1) - v^{\pi}_{\lambda}(L-2, k+1)) > 0.
\end{equation}
This establishes the correctness of expression (\ref{ineqq3}) for all $j = 3, \ldots, L-3$.
\vspace{5mm}
\paragraph{\textbf{Proof of expression (\ref{ineqq4})}:}
First, we verify the validity of inequality (\ref{ineqq4}) for $j = L-3$ using expressions (\ref{pi1_Lmin2Lmin3}), (\ref{pi1_Lmin2Lmin2}) and (\ref{pi1_LLmin3}). Now, we assume that the inequality holds for $j = k+1$, for some $k = 2, \ldots, L-4$. This induction hypothesis and expression (\ref{pi1_rec4}) yield
\begin{equation*}
\begin{array}{ll}
    v^{\pi}_{\lambda}(L-2, k) + 5v^{\pi}_{\lambda}(L-2, k+1) - 6 v^{\pi}_{\lambda}(L, k) \\
    \qquad \qquad \qquad = \dfrac{5\lambda}{7-2\lambda}(v^{\pi}_{\lambda}(L-2, k+1) + 5v^{\pi}_{\lambda}(L-2, k+2) - 6 v^{\pi}_{\lambda}(L, k+1)) > 0.
    \end{array}
\end{equation*}
It follows that inequality (\ref{ineqq4}) holds for all $j = 3, \ldots, L-3$.
\vspace{5mm}
\paragraph{\textbf{Proof of expression (\ref{ineqq5})}:}
Using expressions (\ref{pi1_rec4}) and (\ref{pi1_Lmin3Lmin3}), we obtain
\begin{equation}\label{pi1_Lmin3Lmin3}
    v^{\pi}_{\lambda}(L-3, L-3) = \dfrac{900 \lambda^4}{(7-2\lambda)^2(1-\lambda)(7-\lambda)^2}.
\end{equation}
Inequality (\ref{ineqq5}) is easily verified for $i = L-3$ using expressions (\ref{pi1_Lmin3Lmin3}) and (\ref{pi1_Lmin2Lmin3}). We now assume that it holds for $i = k+1$, for some $k = 2, \ldots, L-4$. This implies for $i = k$, invoking expression (\ref{pi1_rec4}) twice and using symmetry,
\begin{align*}
    v^{\pi}_{\lambda}(k, k+1) - v^{\pi}_{\lambda}(k, k) &= v^{\pi}_{\lambda}(k+1, k) - v^{\pi}_{\lambda}(k, k) \\
    &= \dfrac{5\lambda}{7-2\lambda} [v^{\pi}_{\lambda}(k+1, k+1) - v^{\pi}_{\lambda}(k, k+1)] \\
    &= \dfrac{5\lambda}{7-2\lambda} [v^{\pi}_{\lambda}(k+1, k+1) - v^{\pi}_{\lambda}(k+1, k)] \\
    &= \left(\dfrac{5\lambda}{7-2\lambda}\right)^2[v^{\pi}_{\lambda}(k+1, k+2) - v^{\pi}_{\lambda}(k+1, k+1)].
\end{align*}
This establishes the validity of inequality (\ref{ineqq5}) for all $i = 3, \ldots, L-3$. 
\vspace{5mm}
\paragraph{\textbf{Proof of expression (\ref{ineqq6})}:}
We prove the correctness of inequality (\ref{ineqq6}) by means of an induction argument with the following structure: first, we show that the inequality holds for states of the form $(i, i-1)$, for $i = 3, \ldots, L-3$, by means of induction over $i$. Then, for each $i = 3, \ldots, L-3$, we show that the inequality is true for states $(i, k+1)$, $k = 2, \ldots, i-2$. 

Consider states of the form $(i, i-1)$, where $i = 3, \ldots, L-3$. To prove that inequality (\ref{ineqq6}) holds for these states, we first verify its correctness for state $(L-3, L-4)$. Using expressions (\ref{pi1_Lmin3Lmin3}) and (\ref{rec_4}), we obtain
\begin{equation}\label{pi1_Lmin3Lmin4}
    v^{\pi}_{\lambda}(L-3, L-4) = \dfrac{4500\lambda^5}{(7-2\lambda)^3(1-\lambda)(7-\lambda)^2}.
\end{equation}
Inserting the explicit expressions (\ref{pi1_Lmin3Lmin3}) and (\ref{pi1_Lmin3Lmin4}) into inequality (\ref{ineqq6}) yields its validity for state $(L-3, L-4)$. Now, we assume that the inequality holds for state $(k+1, k)$, for some $k = 2, \ldots, L-4$. Invoking expression (\ref{rec_4}) twice and using symmetry, we obtain
\begin{align*}
    &-9v^{\pi}_{\lambda}(k, k-1) + 16v^{\pi}_{\lambda}(k, k) -7v^{\pi}_{\lambda}(k-1, k) \\
    &\qquad\qquad\qquad= \dfrac{5\lambda}{7-2\lambda}(-9v^{\pi}_{\lambda}(k, k) + 16v^{\pi}_{\lambda}(k, k+1) -7v^{\pi}_{\lambda}(k-1, k+1)) \\
    &\qquad\qquad\qquad= \dfrac{5\lambda}{7-2\lambda}(-9v^{\pi}_{\lambda}(k, k) + 16v^{\pi}_{\lambda}(k+1, k) - 7 v^{\pi}_{\lambda}(k+1, k-1)) \\
    &\qquad\qquad\qquad= \left(\dfrac{5\lambda}{7-2\lambda}\right)^2(-9v^{\pi}_{\lambda}(k, k+1) + 16v^{\pi}_{\lambda}(k+1, k+1) - 7v^{\pi}_{\lambda}(k+1, k) \\
    &\qquad\qquad\qquad= \left(\dfrac{5\lambda}{7-2\lambda}\right)^2(-9v^{\pi}_{\lambda}(k+1, k) + 16v^{\pi}_{\lambda}(k+1, k+1) -7v^{\pi}_{\lambda}(k, k+1)) > 0,
\end{align*}
by the induction hypothesis. Thus, inequality (\ref{ineqq6}) holds for all states $(i, i-1)$, $i = 3, \ldots, L-3$. 

Given $i = 3, \ldots, L-3$, we now show that inequality (\ref{ineqq6}) is true for states $(i, j)$, $j = 3, \ldots, i-1$. Note that the case $j = i-1$ has been established above. We now assume that the inequality is correct for state $(i, k+1)$, for some $k = 2, \ldots, i-2$. This implies for state $(i,k)$, using expression (\ref{rec_4})
\begin{align*}
    &-9v^{\pi}_{\lambda}(i,k) + 16v^{\pi}_{\lambda}(i, k+1) -7v^{\pi}_{\lambda}(i-1, k+1) \\
    &\qquad\qquad=\dfrac{5\lambda}{7-2\lambda}(-9v^{\pi}_{\lambda}(i, k+1) + 16v^{\pi}_{\lambda}(i, k+2) - 7v^{\pi}_{\lambda}(i-1, k+2)) > 0.
\end{align*}
Hence, we established the validity of inequality (\ref{ineqq6}) for all $(i,j)$, $i,j = 3, \ldots, L-3$, $i > j$. 
\vspace{5mm}
\paragraph{\textbf{Proof of expression (\ref{ineqq7})}:}
For all $(i,j)$, $i,j = 3, \ldots, L-3$, $i > j$, we have, using symmetry and expression (\ref{rec_4}),
\begin{align*}
    v^{\pi}_{\lambda}(i + 1, j) - v^{\pi}_{\lambda}(i,j) &= v^{\pi}_{\lambda}(i+1,j) - v^{\pi}_{\lambda}(j, i) \\
    &= v^{\pi}_{\lambda}(i+1, j) - \dfrac{5\lambda}{7-2\lambda}v^{\pi}_{\lambda}(j, i+1) \\
    &= \dfrac{7(1-\lambda)}{7-2\lambda}v^{\pi}_{\lambda}(i+1,j) > 0.
\end{align*}
\vspace{5mm}
\paragraph{\textbf{Proof of expression (\ref{ineqq8})}:}
Using symmetry and expression (\ref{rec_4}), we obtain
\begin{equation*}
    v^{\pi}_{\lambda}(i+1, j) = \dfrac{5\lambda}{7-2\lambda} v^{\pi}_{\lambda}(i+1, j+1) = \dfrac{5\lambda}{7-2\lambda}v^{\pi}_{\lambda}(j+1, i+1) = v^{\pi}_{\lambda}(j+1, i) = v^{\pi}_{\lambda}(i, j+1)
\end{equation*}
for all $i, j = 3, \ldots, L-3$, $i > j$, thereby establishing expression (\ref{ineqq8}).

\subsection{Proof of Theorem \ref{thm2}}

    By Theorem \ref{optimality_thm}, it suffices to show that the value function $v^{\pi}_{\lambda}: \hat{S} \rightarrow \mathbb{R}$ of a stationary, deterministic policy $\pi$ satisfies the Bellman equations if and only if it has the specified form. We will prove this statement for states of the form $(i,j) \in \hat{S}$, $i \geq j$. The analogous result for states of the form $(i,j) \in \hat{S}$, $i < j$, will follow from symmetry.

    Consider a policy $\pi = (d)^{\infty}$ with $d(i,j) \in A^*(i,j)$ for all $(i,j) \in \hat{S}$. Using the transition probabilities \eqref{trans_probs_b1}--\eqref{trans_probs_b2_prime_Lmin2}, we derive the following expressions for the value function $v^{\pi}_{\lambda}: \hat{S} \times \hat{A} \rightarrow \mathbb{R}$. 
        \begin{align}
      \label{rec_1}&v_{\lambda}^{\pi}(L, L)= 0, \\
    \label{rec_2}&v^{\pi}_{\lambda}(L, L-2) = -\dfrac{21}{7-\lambda}U + \dfrac{6\lambda}{7-\lambda} v^{\pi}(L, L), \\ 
    \label{rec_3}&v^{\pi}_{\lambda}(L,j) = -\dfrac{21}{7-2\lambda}U+\dfrac{5\lambda}{7-2\lambda}v^{\pi}(L, j+1), \quad 3 \leq j \leq L-3,\\
    \label{rec_4}&v^{\pi}_{\lambda}(L-2, L-2) = -\dfrac{6}{3-\lambda}U+\dfrac{2\lambda}{3-\lambda}v^{\pi}(L, L-2), \\
    \label{rec_5}&v^{\pi}_{\lambda}(L-2,j) = -\dfrac{4}{2-\lambda}U+\dfrac{\lambda}{2-\lambda}v^{\pi}_{\lambda}(L-2, j+1), \quad 3 \leq j \leq L-3, \\
    \label{rec_6}&v^{\pi}_{\lambda}(i,j) = -\dfrac{4}{2-\lambda}U+\dfrac{\lambda}{2-\lambda}v^{\pi}_{\lambda}(i,j+1), \quad 3 \leq i \leq L-2, \quad 3 \leq j \leq i.
\end{align}
We proceed to show that 
\begin{equation}\label{inequalities}
    \hat{r}_2(s, a) + \lambda \sum\limits_{s' \in \hat{S}}\hat{P}(s'|s, a)v^{\pi}_{\lambda}(s') >  \hat{r}_2(s, a') + \lambda \sum\limits_{s' \in \hat{S}}\hat{P}(s'|s, a')v^{\pi}_{\lambda}(s'),
\end{equation}
for all $a \in A^*(i,j)$, $a' \notin A^*(i,j)$, $s = (i,j) \in \hat{S}$ and
\begin{equation}\label{equalities}
     \hat{r}_2(s, a) + \lambda \sum\limits_{s' \in \hat{S}}\hat{P}(s'|s, a)v^{\pi}_{\lambda}(s') =  \hat{r}_2(s, a') + \lambda \sum\limits_{s' \in \hat{S}}\hat{P}(s'|s, a')v^{\pi}_{\lambda}(s'),
\end{equation}
for all $a, a' \in A^*(i,j)$. Observe that the statement for states $(i,j) \in \hat{S}$ with $i = L$ or $j = L$ is obvious, since $\hat{A}(i,j)$ contains only one action in this case. 

For state $(L-2, L-2)$, inequality (\ref{inequalities}) becomes
\begin{equation*}
    -2U + \dfrac{\lambda}{3}v^{\pi}_{\lambda}(L-2, L-2) + \dfrac{2\lambda}{3}v^{\pi}_{\lambda}(L, L-2) > -3U + \dfrac{\lambda}{7}v^{\pi}_{\lambda}(L-2,L-2) + \dfrac{6\lambda}{7}v^{\pi}_{\lambda}(L, L-2),
\end{equation*}
which can be written as
\begin{equation}
    \label{ineq_1} 21U + 4\lambda v^{\pi}_{\lambda}(L-2, L-2) - 4\lambda v^{\pi}_{\lambda}(L, L-2) > 0.
\end{equation}
For the remaining states $(i,j) \in \hat{S}$, $i \geq j$, expressions (\ref{inequalities}) and (\ref{equalities}) reduce to
\begin{align}
     \label{ineq_2}&14U+3\lambda v^{\pi}_{\lambda}(L-2, j) -3\lambda v^{\pi}_{\lambda}(L-2, j+1) > 0, \quad 3 \leq j \leq L-3, \\
     \label{ineq_3}&14U + 5\lambda v^{\pi}_{\lambda}(L-2, j) + 7\lambda v^{\pi}_{\lambda}(L-2, j+1) - 12\lambda v^{\pi}_{\lambda}(L,j) > 0, \quad 3 \leq j \leq L-3, \\
     \label{ineq_4}&v^{\pi}_{\lambda}(L-2, j) + 3v^{\pi}_{\lambda}(L-2, j+1) - 4 v^{\pi}_{\lambda}(L,j) > 0, \quad 3 \leq j \leq L-3, \\
     \label{ineq_5}&v^{\pi}_{\lambda}(i, j+1) = v^{\pi}_{\lambda}(i+1, j), \quad 3 \leq j \leq i \leq L-3, \\
     \label{ineq_6}&14U + 3\lambda v^{\pi}_{\lambda}(i,j) - 3\lambda v^{\pi}_{\lambda}(i, j+1) > 0, \quad 3 \leq j \leq i \leq L-3.
\end{align}
We proceed to prove each of the expressions \eqref{ineq_1}--\eqref{ineq_6}.
\vspace{5mm}
\paragraph{\textbf{Proof of expression (\ref{ineq_1}):}}
Using expressions (\ref{rec_1}), (\ref{rec_2}) and (\ref{rec_4}), we obtain
\begin{equation}\label{LLmin2_expl}
    v^{\pi}_{\lambda}(L, L-2) = -\dfrac{21}{7-\lambda}U
\end{equation}
and
\begin{equation}\label{Lmin2Lmin2_expl}
    v^{\pi}_{\lambda}(L-2, L-2) = - \dfrac{6(7+6\lambda)}{(7-\lambda)(3-\lambda)}U.
\end{equation}
Inserting these explicit expressions in inequality (\ref{ineq_1}) establishes its validity.
\vspace{5mm}
\paragraph{\textbf{Proof of expression (\ref{ineq_2}):}}
First, we verify the validity of expression (\ref{ineq_2}) for $j = L-3$. Using expressions (\ref{rec_5}) and (\ref{Lmin2Lmin2_expl}), we obtain
\begin{equation}\label{Lmin2Lmin3_expl}
    v^{\pi}_{\lambda}(L-2, L-3) = -\dfrac{20\lambda^2 + \lambda + 42}{(7-\lambda)(3-\lambda)(2-\lambda)}.
\end{equation}
Inserting expressions (\ref{Lmin2Lmin3_expl}) and (\ref{Lmin2Lmin2_expl}) proves the validity of expression (\ref{ineq_2}) for $j = L-3$. 
Now, suppose that it holds for some $j = k+1$, $k = 2, \ldots, L-4$. Invoking expressions (\ref{rec_3}) and (\ref{rec_4}) and the induction hypothesis now yields
\begin{align*}
    &14U+3\lambda v^{\pi}_{\lambda}(L-2, k) -3\lambda v^{\pi}_{\lambda}(L-2, k+1) \\
    &\qquad\qquad= 14U + \dfrac{3\lambda^2}{2-\lambda}v^{\pi}_{\lambda}(L-2, k+1) - \dfrac{3\lambda^2}{2-\lambda}v^{\pi}_{\lambda}(L-2, k+2) \\
    &\qquad\qquad= \dfrac{\lambda}{2-\lambda}[14U + 3\lambda v^{\pi}_{\lambda}(L-2, k+1) - 3\lambda v^{\pi}_{\lambda}(L-2, k+2)] + \dfrac{28(1-\lambda)}{2-\lambda} U > 0.
\end{align*}
It follows that inequality (\ref{ineq_2}) holds for all $j = 2, \ldots, L-3$.
\vspace{5mm}
\paragraph{\textbf{Proof of expression (\ref{ineq_3}):}}
Again, we start by verifying the correctness or inequality (\ref{ineq_3}) for $j = L-3$. Using expression (\ref{rec_3}) and (\ref{LLmin2_expl}), we obtain
\begin{equation}\label{LLmin3_expl}
    v^{\pi}_{\lambda}(L, L-3) = -\dfrac{21(7+4\lambda)}{(7-\lambda)(7-2\lambda)}.
\end{equation}
Inserting expressions (\ref{Lmin2Lmin2_expl}), (\ref{Lmin2Lmin3_expl}) and (\ref{LLmin3_expl}) into expression (\ref{ineq_3}) yields its correctness or $j = L-3$.
We proceed to make the assumption that it holds for some $j = k+1$, $k = 2, \ldots, L-4$. Using expressions (\ref{rec_3}) and (\ref{rec_5}), we now obtain
\begin{align*}
    &14U + 5\lambda v^{\pi}_{\lambda}(L-2, k) + 7\lambda v^{\pi}_{\lambda}(L-2, k+1) - 12\lambda v^{\pi}_{\lambda}(L,k) \\
    &\qquad\qquad= \dfrac{\lambda}{2-\lambda}[14U+5\lambda v^{\pi}_{\lambda}(L-2, k+1) + 7\lambda v^{\pi}_{\lambda}(L-2, k+2) - 12\lambda v^{\pi}_{\lambda}(L, k+1)] \\
    &\qquad\qquad\quad- \dfrac{4(25\lambda^2+21\lambda-49)}{(2-\lambda)(7-2\lambda)}U - \dfrac{36\lambda^2(1-\lambda)}{(2-\lambda)(7-2\lambda)}v^{\pi}_{\lambda}(L, k+1).
\end{align*}
The induction hypothesis now implies that it suffices to prove that
\begin{align}\label{ineq_3_help1}
    \dfrac{36\lambda^2(1-\lambda)}{(2-\lambda)(7-2\lambda)}v^{\pi}_{\lambda}(L, \ell+1) < - \dfrac{4(25\lambda^2+21\lambda-49)}{(2-\lambda)(7-2\lambda)}U,
\end{align}
for all $\ell = 2, \ldots, L-4$. We prove this statement by means of an embedded induction argument over $\ell$. After checking the validity of expression (\ref{ineq_3_help1}) for $\ell = L-4$, we assume that it holds for $\ell = \ell' + 1$, for some $\ell' = 2, \ldots, L-5$. Using this induction hypothesis, we obtain
\begin{align}
    \nonumber \dfrac{36\lambda^2(1-\lambda)}{(2-\lambda)(7-2\lambda)}v^{\pi}_{\lambda}(L, \ell'+1) &=  \dfrac{36\lambda^2(1-\lambda)}{(2-\lambda)(7-2\lambda)} \left[-\dfrac{21}{7-2\lambda} U + \dfrac{5\lambda}{7-2\lambda}v^{\pi}_{\lambda}(L, \ell'+2)\right] \\
    &< - \dfrac{4(25\lambda^2+21\lambda-49)}{(2-\lambda)(7-2\lambda)}U.
\end{align}
This establishes expression (\ref{ineq_3_help1}) for all $\ell = 2, \ldots, L-4$, which yields the correctness of expression (\ref{ineq_3}) for all $j = 3, \ldots, L-3$. 
\vspace{5mm}
\paragraph{\textbf{Proof of expression (\ref{ineq_4}):}}
Again, we start by verifying the validity of expression (\ref{ineq_4}) for $j = L-3$. This is easily done by inserting expressions (\ref{Lmin2Lmin2_expl}), (\ref{Lmin2Lmin3_expl}) and (\ref{LLmin3_expl}). Suppose now that it holds for $j = k+1$, $k = 2, \ldots, L-4$. Invoking expressions (\ref{rec_3}) and (\ref{rec_5}) yields
\begin{align}
    \nonumber &v^{\pi}_{\lambda}(L-2, k) + 3v^{\pi}_{\lambda}(L-2, k+1) - 4 v^{\pi}_{\lambda}(L,k) \\
    \nonumber &\qquad\qquad= \dfrac{\lambda}{2-\lambda} \left[v^{\pi}_{\lambda}(L-2, k+1) + 3v^{\pi}_{\lambda}(L-2, k+2) - 4v^{\pi}_{\lambda}(L, k+1)\right] \\
    \nonumber &\qquad\qquad\quad+ \dfrac{56-52\lambda}{2\lambda^2-11\lambda + 14} U - \dfrac{12\lambda (1-\lambda)}{(2-\lambda)(7-2\lambda)}v^{\pi}_{\lambda}(L, k+1).
\end{align}
By the induction hypothesis, it now suffices to show that
\begin{align}\label{ineq_4_help1}
    \dfrac{12\lambda (1-\lambda)}{(2-\lambda)(7-2\lambda)}v^{\pi}_{\lambda}(L, \ell+1) < \dfrac{56-52\lambda}{2\lambda^2-11\lambda + 14} U,
\end{align}
for all $\ell = 2, \ldots, L-4$. Again, we use an embedded induction argument over $\ell$ to establish this statement. First, we verify the correctness of expression (\ref{ineq_4_help1}) for $\ell = L-4$, using expression (\ref{LLmin3_expl}). Now, suppose that it holds for $\ell = \ell'+1$, for some $\ell' = 2, \ldots, L-5$. Using this induction hypothesis, together with expression (\ref{rec_3}), we obtain
\begin{align}
    \nonumber \dfrac{12\lambda (1-\lambda)}{(2-\lambda)(7-2\lambda)}v^{\pi}_{\lambda}(L, \ell'+1) &= \dfrac{12\lambda (1-\lambda)}{(2-\lambda)(7-2\lambda)} \left[-\dfrac{21}{7-2\lambda} U + \dfrac{5\lambda}{7-2\lambda}v^{\pi}_{\lambda}(L, \ell'+2)\right] \\
    &< \dfrac{56-52\lambda}{2\lambda^2-11\lambda + 14} U. 
\end{align}
This shows the validity of expression (\ref{ineq_4_help1}) for $\ell = 2, \ldots, L-4$ and thus the correctness of expression (\ref{ineq_4}) for all $j = 3, \ldots, L-3$.
\vspace{5mm}
\paragraph{\textbf{Proof of expression (\ref{ineq_5}):}}
First, observe that the case $(i,i)$, $i = 3, \ldots, L-3$ follows immediately by symmetry. Now, given $i = 3, \ldots, L-3$, we assume that it holds for state $(i, k+1)$ for some $k = 2, \ldots, L-4$. Using this induction hypothesis and expression (\ref{rec_6}), we obtain
\begin{align*}
    v^{\pi}_{\lambda}(i, k+1) &= -\dfrac{4}{2-\lambda}U + \dfrac{\lambda}{2-\lambda}v^{\pi}_{\lambda}v^{\pi}_{\lambda}(i, k+2) \\
    &= -\dfrac{4}{2-\lambda}U + \dfrac{\lambda}{2-\lambda}v^{\pi}_{\lambda}(i+1, k+1) = v^{\pi}_{\lambda}(i+1, k).
\end{align*}
This concludes the proof of expression (\ref{ineq_5}).
\vspace{5mm}
\paragraph{\textbf{Proof of expression (\ref{ineq_6}):}}
We start by showing that the expression is correct for states of the form $(i,i)$, $i = 3, \ldots, L-3$. Using expression (\ref{rec_6}), we can write
\begin{align*}
    &14U + 3\lambda v^{\pi}_{\lambda}(i,i) - 3\lambda v^{\pi}_{\lambda}(i,i+1) \\
    &\qquad\qquad\qquad\qquad= 14U + 3\lambda \left[\dfrac{4}{2-\lambda}U + \dfrac{\lambda}{2-\lambda}v^{\pi}_{\lambda}(i,i+1)\right] - 3\lambda v^{\pi}_{\lambda}(i, i+1) \\
    &\qquad\qquad\qquad\qquad= \dfrac{2(14-\lambda)}{2-\lambda}U - \dfrac{6\lambda(1-\lambda)}{2-\lambda} v^{\pi}_{\lambda}(i, i+1).
\end{align*}
It follows that expression (\ref{ineq_6}) holds for $(i,i)$, $i = 3, \ldots, L-3$ if 
\begin{align}\label{ineq_6_help1}
    v^{\pi}_{\lambda}(i, i+1) < \dfrac{14-\lambda}{3\lambda(1-\lambda)}U.
\end{align}
We proceed to show that expression (\ref{ineq_6_help1}) is valid for $i = 3, \ldots, L-3$ by means of induction over $i$. First, we verify its correctness for $i = L-3$, using expression (\ref{Lmin2Lmin3_expl}). Now, suppose that it is true for $i = k+1$, $k = 2, \ldots, L-4$. We show that this implies its validity for $i = k$. By symmetry, expression (\ref{rec_6}) and the induction hypothesis, we obtain
\begin{align*}
    v^{\pi}_{\lambda}(k, k+1) &= \dfrac{4}{2-\lambda} U + \dfrac{\lambda}{2-\lambda} v^{\pi}_{\lambda}(k, k+2) \\
    &= \dfrac{4}{2-\lambda} U + \dfrac{\lambda}{2-\lambda} v^{\pi}_{\lambda}(k+2,k) \\
    &= \dfrac{4}{2-\lambda}U + \dfrac{\lambda}{2-\lambda} \left[\dfrac{4}{2-\lambda} U + \dfrac{\lambda}{2-\lambda} v^{\pi}_{\lambda}(k+2, k+1)\right] \\
    &= \dfrac{8}{(2-\lambda)^2}U + \dfrac{\lambda^2}{(2-\lambda)^2}v^{\pi}_{\lambda}(k+1, k+2) \\
    &< \dfrac{12+\lambda}{3(2-\lambda)(1-\lambda)} U < \dfrac{14-\lambda}{3\lambda(1-\lambda)} U.
\end{align*}
This establishes the correctness of expression (\ref{ineq_6}) for states of the form $(i,i)$, $i = 3, \ldots, L-3$. Now, given $i = 3, \ldots, L-3$, we assume that expression (\ref{ineq_6}) is valid for state $(i,k+1)$, for some $k = 2, \ldots, L-4$. For state $(i,k)$, this induction hypothesis implies, invoking expression (\ref{rec_6}),
\begin{align*}
    &14U + 3\lambda v^{\pi}_{\lambda}(i,k) - 3\lambda v^{\pi}_{\lambda}(i,k+1) \\
    &\qquad\qquad= 14U + 3\lambda \left[\dfrac{4}{2-\lambda} U + \dfrac{\lambda}{2-\lambda} v^{\pi}_{\lambda}(i,k+1)\right] - 3\lambda \left[\dfrac{4}{2-\lambda}U + \dfrac{\lambda}{2-\lambda} v^{\pi}_{\lambda}(i,k+2)\right] \\
    &\qquad\qquad= \dfrac{\lambda}{2-\lambda} \left[14U + 3\lambda v^{\pi}_{\lambda}(i,k+1) - 3\lambda v^{\pi}_{\lambda}(i, k+2) \right] + \dfrac{28(1-\lambda)}{2-\lambda}U > 0.
\end{align*}
Thus, it follows that expression (\ref{ineq_6}) holds for all $(i,j)$, $i,j = 3, \ldots, L-3$.  

\section{Conclusions and future work}
\label{sec:concl}
In this paper, we introduced an MDP perspective on the low--temperature Ising model evolving according to Kawasaki dynamics inside a finite box in the metastable regime. We formulated a controlled version of the dynamics, specified the associated state and action spaces, and considered two distinct reward functions. We then studied a reduced MDP restricted to configurations containing a single cluster of particles, allowing us to identify the exact form of the optimal policies for both reward functions. The comparison between the two settings illustrates how adding an energy--based cost alters the preferred growth mechanism.

There are several directions that we believe deserve further investigation. A natural step is to relax some of the simplifying assumptions of the reduced model, for instance by allowing multiple clusters. It would also be interesting to quantify how closely the optimal controlled dynamics mimic the behavior of the original Kawasaki dynamics at low temperature, and to understand which features of metastable transitions can be captured through suitable choices of reward functions.

Another possibility is to take advantage of the geometric structure that naturally arises in Kawasaki dynamics. Since particle exchanges occur locally, the behavior of the system is largely determined by the shape and local geometry of the particle cluster, namely, whether a boundary site is flat, protruding, or part of a corner, and how many occupied neighbours it has. This is precisely the type of information that drives energy changes in the original dynamics. In the simplified MDP we considered, we already made use of some of these geometric features; however, capturing the full range of possible boundary shapes would lead to an extremely rich, and analytically difficult, policy space. For this reason, one promising direction is to focus on policies that depend only on coarse geometric descriptions of the boundary, for instance the distribution of different boundary types, rather than on the full microscopic configuration. Such reduced descriptions retain the essential mechanisms that influence nucleation and growth, while avoiding the combinatorial explosion that comes with treating every possible configuration separately. Such geometric reductions open a natural path for future work: they would extend the ideas developed here while allowing a more detailed account of the local structures that promote or hinder growth, and of how different reward structures interact with these features. This direction would build on the present paper by connecting the MDP viewpoint closer to the full geometric complexity of Kawasaki dynamics.

On the computational side, one could explore approximate dynamic programming or reinforcement learning approaches to deal with the rapid growth of the state space. Even in the simplified setting considered here, the number of possible configurations increases quickly with the box size, and computing exact value functions or optimal policies becomes impractical. Techniques such as 
policy iteration with function approximation could help identify near--optimal strategies without having to enumerate the full state space. Such computational tools may offer a practical way to study controlled metastable behavior in settings where exact analysis is out of reach, and could provide a bridge between the theoretical framework developed here and numerical investigations of large systems.

\section*{Acknowledgements}The work of SB was supported by the European Union’s Horizon 2020 research and innovation programme under the Marie Skłodowska-Curie grant agreement no.\ 101034253 while affiliated with Leiden University. SB was further supported through “Gruppo Nazionale per l’Analisi Matematica, la Probabilità e le loro Applicazioni” (GNAMPA-INdAM).
 \includegraphics[height=1em]{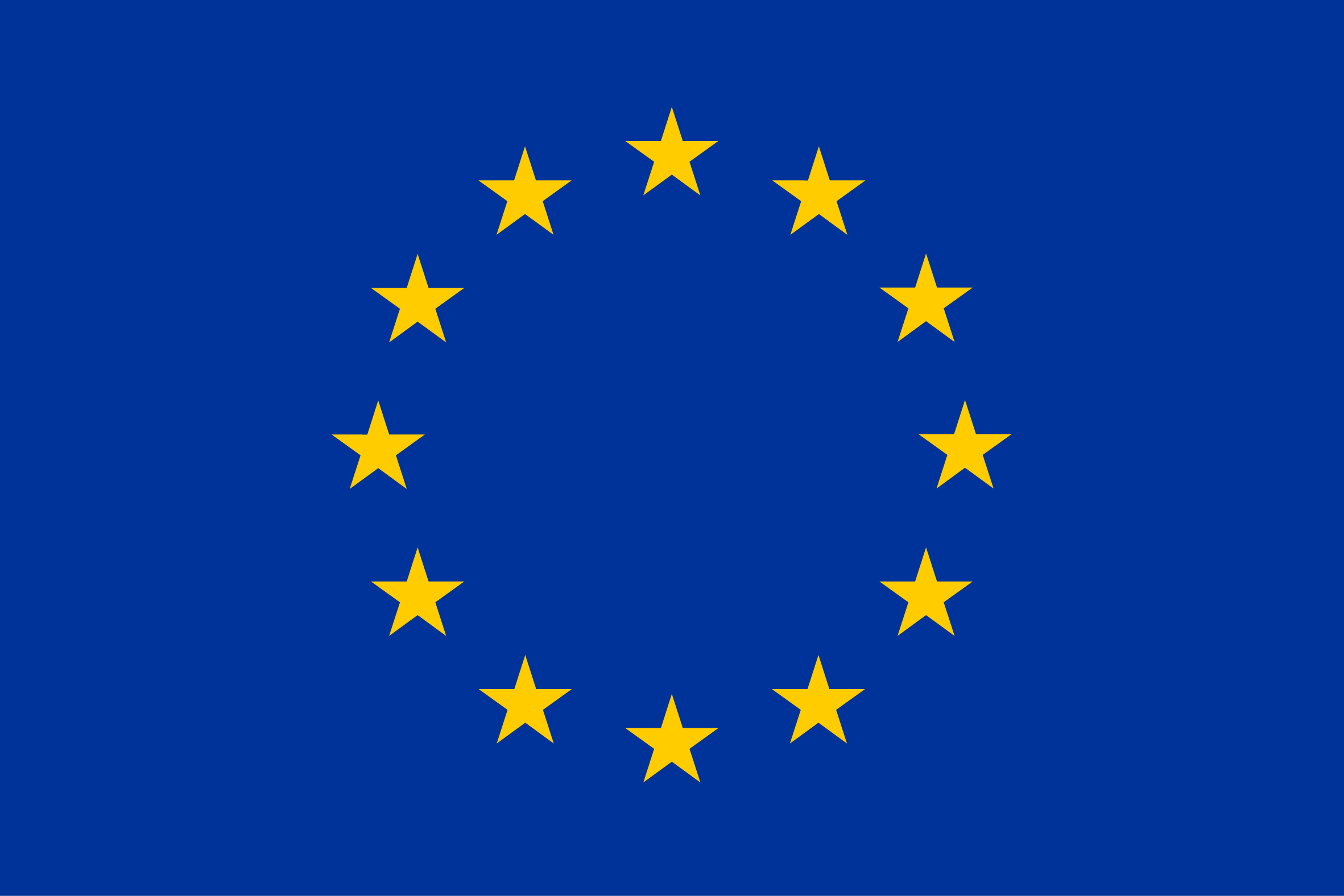}


\small
\bibliographystyle{abbrvnat}
\bibliography{biblio.bib}

\end{document}